\newcommand*{\mint}[1]{%
  \mint@l{#1}{}%
}
\newcommand*{\mint@l}[2]{%
  \@ifnextchar\limits{%
    \mint@l{#1}%
  }{%
    \@ifnextchar\nolimits{%
      \mint@l{#1}%
    }{%
      \@ifnextchar\displaylimits{%
        \mint@l{#1}%
      }{%
        \mint@s{#2}{#1}%
      }%
    }%
  }%
}
\newcommand*{\mint@s}[2]{%
  \@ifnextchar_{%
    \mint@sub{#1}{#2}%
  }{%
    \@ifnextchar^{%
      \mint@sup{#1}{#2}%
    }{%
      \mint@{#1}{#2}{}{}%
    }%
  }%
}
\def\mint@sub#1#2_#3{%
  \@ifnextchar^{%
    \mint@sub@sup{#1}{#2}{#3}%
  }{%
    \mint@{#1}{#2}{#3}{}%
  }%
}
\def\mint@sup#1#2^#3{%
  \@ifnextchar_{%
    \mint@sup@sub{#1}{#2}{#3}%
  }{%
    \mint@{#1}{#2}{}{#3}%
  }%
}
\def\mint@sub@sup#1#2#3^#4{%
  \mint@{#1}{#2}{#3}{#4}%
}
\def\mint@sup@sub#1#2#3_#4{%
  \mint@{#1}{#2}{#4}{#3}%
}
\newcommand*{\mint@}[4]{%
  \mathop{}%
  \mkern-\thinmuskip
  \mathchoice{%
    \mint@@{#1}{#2}{#3}{#4}%
        \displaystyle\textstyle\scriptstyle
  }{%
    \mint@@{#1}{#2}{#3}{#4}%
        \textstyle\scriptstyle\scriptstyle
  }{%
    \mint@@{#1}{#2}{#3}{#4}%
        \scriptstyle\scriptscriptstyle\scriptscriptstyle
  }{%
    \mint@@{#1}{#2}{#3}{#4}%
        \scriptscriptstyle\scriptscriptstyle\scriptscriptstyle
  }%
  \mkern-\thinmuskip
  \int#1%
  \ifx\\#3\\\else_{#3}\fi
  \ifx\\#4\\\else^{#4}\fi
}
\newcommand*{\mint@@}[7]{%
  \begingroup
    \sbox0{$#5\int\m@th$}%
    \sbox2{$#5\int_{}\m@th$}%
    \dimen2=\wd0 %
    \let\mint@limits=#1\relax
    \ifx\mint@limits\relax
      \sbox4{$#5\int_{\kern1sp}^{\kern1sp}\m@th$}%
      \ifdim\wd4>\wd2 %
        \let\mint@limits=\nolimits
      \else
        \let\mint@limits=\limits
      \fi
    \fi
    \ifx\mint@limits\displaylimits
      \ifx#5\displaystyle
        \let\mint@limits=\limits
      \fi
    \fi
    \ifx\mint@limits\limits
      \sbox0{$#7#3\m@th$}%
      \sbox2{$#7#4\m@th$}%
      \ifdim\wd0>\dimen2 %
        \dimen2=\wd0 %
      \fi
      \ifdim\wd2>\dimen2 %
        \dimen2=\wd2 %
      \fi
    \fi
    \rlap{%
      $#5%
        \vcenter{%
          \hbox to\dimen2{%
            \hss
            $#6{#2}\m@th$%
            \hss
          }%
        }%
      $%
    }%
  \endgroup
}
\def\rr{{\mathbb R}}
\def\rn{{{\rr}^n}}
\def\nn{{\mathbb N}}
\def\fz{\infty}
\def\az{\alpha}
\def\loc{{\mathop\mathrm{\,loc\,}}}
\def\dz{\delta}
\def\bdz{\Delta}
\def\ez{\epsilon}
\def\kz{\kappa}
\def\bz{\beta}
\def\gz{{\gamma}}
\def\pa{\partial}
\def\nb{\nabla}
\def\bint{{\ifinner\rlap{\bf\kern.35em--}
\int\else\rlap{\bf\kern.45em--}\int\fi}\ignorespaces}
\def\bbint{{\ifinner\rlap{\bf\kern.35em--}
\hspace{0.078cm}\int\else\rlap{\bf\kern.45em--}\int\fi}\ignorespaces}
\def\ddiv{{\rm div}}
\def\lr{\right}
\def\lf{\left}
\def\la{\langle}
\def\ra{\rangle}
\newtheorem{thm}{Theorem}[section]
\newtheorem{lem}[thm]{Lemma}
\newtheorem{rem}[thm]{Remark}
\newtheorem{cor}[thm]{Corollary}
\numberwithin{equation}{section}
\title
{\Large\bf   A quantitative second order estimate for (weighted) $p$-harmonic functions in    manifolds under curvature-dimension condition
\footnotetext{\hspace{-0.35cm}
\endgraf
2020 {\it Mathematics Subject Classification:} 58J05 $\cdot$ 35J92
\endgraf The first author is funded by the Academy of Finland (Grant No. 321896 and Grant No. 328846).
The second author is funded by NSFC (Grant  No. 12171018).
The third author is funded by NSFC (Grant No. 12025102).
 \endgraf Data availability: No data was used for the research described in the article.
}}
\author{Jiayin Liu, Shijin Zhang and Yuan Zhou}
 \date{}
\begin{document}

\arraycolsep=1pt
\allowdisplaybreaks
 \maketitle

\begin{center}
\begin{minipage}{13.5cm}\small
 \noindent{\bf Abstract.}\quad
We build up a quantitative {\color{black}second-order} Sobolev estimate  of $ \ln w$
for positive $p$-harmonic functions $w$ in Riemannian manifolds under Ricci curvature bounded from {\color{black}below} and also for positive weighted $p$-harmonic functions $w$
  in  weighted manifolds under the Bakry-\'{E}mery  curvature-dimension condition.

\end{minipage}
\end{center}


\section{Introduction}

Let $(M^n,g)$ be a complete non-compact Riemannian manifold with dimension $n\ge2$.
Suppose that the Ricci curvature is bounded from below,  that is,
$   Ric_g  \ge - \kz$    for some $\kz\ge0$. For any positive
harmonic function $w$ in a domain $\Omega\subset M^n$,   Cheng-Yau  \cite{cy}    established the
 following  famous gradient estimate:
\begin{equation}\label{cyg}|\nb\ln w|=\frac{|\nb w|}{w} \le C(n)\frac{1+\sqrt{\kz} r}{r} \quad \text{in $    B(z,r)\subset B(z,2r)\subset\Omega$}.
\end{equation}
Recall that a  harmonic function  $w$ in  $\Omega $ is a weak solution   to the Laplace equation
$$\Delta w:={\rm div}(\nb w)=0\mbox{ in $\Omega$.}$$
We  also refer to   \cite[Theorem 1.3]{nst} for a quantitative $ W^{2,2}_\loc$-regularity
of harmonic functions.

Motivated by  the application  in
 the inverse mean curvature flow (see \cite{kn09,mrs}),  Cheng-Yau type gradient estimate was  extended by   \cite{m07,kn09,wz10,mrs} to  $p$-harmonic functions   in $\Omega$ for $1<p<\fz$, that is,
 weak solutions  to the $p$-Laplace equation
$$\bdz_p w ={\rm div}(|\nb w|^{p-2}\nb w)=0 \ \mbox{
 in  $\Omega$. }$$
Precisely, if $(M^n,g)$ is flat (that is, {\color{black}the} Euclidean space $\mathbb R^n$) or its sectional curvature is bounded from below by $-\kz$,
via  Cheng-Yau's approach  Moser \cite{m07} and Kotschwar-Ni \cite{kn09} showed that any positive $p$-harmonic function $w $ in $\Omega$ satisfies
\begin{equation}\label{p-ly}
   |\nb\ln w| \le C(n )\frac{1+\sqrt{\kz} r}{r} \quad \mbox{in $      B(z,r)\subset B(z,2r)\subset\Omega$},
   \end{equation}
where  the constant $C(n)>0$  is  independent  of $p\in(1,\fz)$.
 Under the Ricci curvature lower bound $   Ric_g  \ge - \kz$, it was asked in \cite{kn09} whether
\eqref{p-ly} holds or not.   Some progress was made as below.
Based on Cheng-Yau's argument, Wang-Zhang \cite{wz10} proved that
 \begin{equation}\label{secor} \mbox{$|\nb \ln w|^{\frac{p-\gz}{2}}\in W^{1,2}_\loc $ with  ${\color{black}\gz < 0}$}
\end{equation}
and
the following weaker revision of \eqref{p-ly}:
\begin{equation}\label{p-ly2}
   |\nb\ln w| \le C(n,p )\frac{1+\sqrt{\kz} r}{r} \quad \mbox{in $    B(z,r)\subset B(z,2r)\subset\Omega$},
   \end{equation}
 where  the constant $C(n,p)>0$  blows up as $p\to1$.
Recently, with the aid of the fake distance coming from capacity,
 $ C(n,p)$  was  proved by Mari-Rigoli-Setti \cite{mrs} to be bounded by  $\frac{n-1}{p-1}$ as $p\to1$.
Moreover, \eqref{secor} and \eqref{p-ly2}  were generalized to   weighted manifolds  $(M^n,g,e^{-h}d{\rm  vol}_g)$.
A weighted $p$-harmonic function $w$ in a domain $\Omega\subset M^n$ is a  weak solution  to the weighted $p$-harmonic equation $$\Delta_{p,h}w:=e^{h}{\rm div}(e^{-h}|\nb w|^{p-2}\nb w)=0 \ \mbox{in $\Omega$}.$$
Under the Bakry-\'{E}mery  curvature-dimension  condition  $  Ric_h^N\ge -\kz$ for some $N\in[n,\fz) $ and $\kz\ge 0$ (see Section 2 for details),
 Dung-Dat \cite{dd16} showed that 
if $w>0$, then  $|\nb \ln w|^{\frac{p-\gz}{2}}\in W^{1,2}_\loc $ with  $\gz<0$  and also  \begin{align} \label{mest0}
 |\nb \ln w| \le  C(n,N,p)  \frac{ 1+\sqrt{\kz} r }{r } \quad \text{in $    B(z,r)\subset B(z,2r)\subset\Omega$.}
\end{align}

The main aim of this paper is to
build up a quantitative second-order Sobolev estimate  of $ \ln w$
for positive $p$-harmonic functions $w$ in Riemannian manifolds under Ricci curvature
bounded from {\color{black}below} and also for positive weighted $p$-harmonic functions $w$
  in  weighted manifolds under the Bakry-\'{E}mery  curvature-dimension condition. See Theorem 1.1 and Theorem 1.2 separately.
These improve  the corresponding  second-order Sobolev regularity  
in \cite{wz10,dd16}  mentioned  above.

 To be precise, under the Ricci curvature lower bound, we have the following result.
For convenience, below we write $\bbint_{E}f\,dm$ as the average of $f$ in the set $E$ with respect to the measure $m$, that is, $\bbint_{E}f\,dm=\frac1{m(E)}\int_Ef\,dm$. We use {\color{black}$C(a_1, \cdots, a_m)$ to denote a positive constant depending on absolute constants $a_1, \cdots, a_m$}.
\begin{thm} \label{thm0}
 Suppose that $(M^n,g)$ satisfies $Ric_g \ge
 -  \kz$ for some $\kz\ge 0$.
Let $1<p<\fz$ and $\gz<3+ \frac{p-1}{n-1}$.
For any positive $p$-harmonic function {\color{black}w} in a domain $\Omega\subset M$,
we have   $|\nb \ln w|^{\frac{p-\gz}{2}}\nb \ln w\in W^{1,2}_\loc(\Omega)$ and
\begin{align} \label{mest02}
\bint_{B(z,r) } \lf| \nb[|\nb \ln w|^{\frac{p-\gz}{2}}\nb \ln w ]\lr |^2\, d{\rm vol}_g
& \le   C(n,p,\gz)  \left[\frac{1+\sqrt{\kz} r}{r}\right]^{p-\gz+4}  e^{\sqrt{\kz}r}
\end{align}
 whenever $  B(z,4r)\Subset \Omega$.

In particular, if $1<p<3+\frac{2}{n-2}$, then
   $ \nb^2\ln w\in L^{2}_\loc(\Omega)$   and
\begin{align} \label{mest01}
\bint_{B(z,r)}|\nb^2 \ln w|^2\, d{\rm vol}_g
& \le   C(n,p)  \left[\frac{ 1+\sqrt{\kz} r }{r}\right]^4  e^{\sqrt{\kz}r}
\end{align}
whenever $  B(z,4r)\Subset \Omega$.
\end{thm}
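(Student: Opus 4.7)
The plan is to reduce the theorem to a quantitative Caccioppoli estimate for $u:=\ln w$. A direct calculation converts the $p$-Laplace equation $\mathrm{div}(|\nabla w|^{p-2}\nabla w)=0$ for the positive function $w=e^u$ into the nonlinear equation
$$\mathrm{div}(|\nabla u|^{p-2}\nabla u) = -(p-1)|\nabla u|^p$$
for $u$, satisfied weakly in $\Omega$. The argument will use as an a priori pointwise ingredient the gradient bound \eqref{p-ly2},
$$|\nabla u|^2 \le C(n,p)\,\frac{(1+\sqrt{\kappa}\,r)^2}{r^2}\quad \text{on } B(z,2r)\Subset\Omega,$$
so that any integral Caccioppoli bound involving pure powers of $|\nabla u|$ converts immediately into the right-hand side of \eqref{mest02}.

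The core step is a nonlinear Bochner argument. Writing $f:=|\nabla u|^2$, the classical identity
$$\tfrac12 \Delta f = |\nabla^2 u|^2 + \langle \nabla u,\nabla \Delta u\rangle + \mathrm{Ric}(\nabla u,\nabla u)$$
combined with the expression
$$\Delta u = -(p-1)f - \tfrac{p-2}{2}\,f^{-1}\langle \nabla u,\nabla f\rangle,$$
extracted from the equation by expanding the divergence, together with $\mathrm{Ric}\ge -\kappa$, yields a pointwise inequality. Testing this against $\eta^2 f^{(p-\gamma)/2}$ with a standard cutoff $\eta$ supported in $B(z,2r)$, integrating by parts and absorbing $\nabla\eta$-errors via Young's inequality leads to a Caccioppoli-type estimate of the shape
$$\int \eta^2 f^{(p-\gamma)/2}|\nabla^2 u|^2 + c(n,p,\gamma)\int \eta^2 f^{(p-\gamma)/2-1}|\nabla f|^2 \le C\int (|\nabla\eta|^2+\kappa\eta^2)\,f^{(p-\gamma)/2+1}.$$

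The main technical obstacle is proving that the constant $c(n,p,\gamma)$ is positive whenever $\gamma < 3+\tfrac{p-1}{n-1}$. The improvement over the bound $\gamma\le 2$ of \cite{wz10} arises from replacing the generic Kato inequality $|\nabla^2 u|^2 \ge |\nabla |\nabla u||^2$ by the refined version
$$|\nabla^2 u|^2 \ge (\nabla^2 u)_{11}^2 + 2\sum_{j\ge 2}(\nabla^2 u)_{1j}^2 + \frac{1}{n-1}\bigl(\Delta u-(\nabla^2 u)_{11}\bigr)^2$$
in the orthonormal frame with $e_1 = \nabla u/|\nabla u|$, which exploits the fact that the trace of the remaining $(n-1)\times(n-1)$ block of $\nabla^2 u$ equals $\Delta u-(\nabla^2 u)_{11}$. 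Substituting the PDE expression for $\Delta u$ and balancing the resulting cross-terms in $(\nabla^2 u)_{11}$ and $\langle\nabla u,\nabla f\rangle$ by Cauchy-Schwarz is precisely what produces the factor $\tfrac{p-1}{n-1}$ in the admissible exponent range.

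Finally, the pointwise algebraic bound
$$\bigl|\nabla\bigl(f^{(p-\gamma)/4}\nabla u\bigr)\bigr|^2 \le C(p,\gamma)\bigl(f^{(p-\gamma)/2}|\nabla^2 u|^2 + f^{(p-\gamma)/2-1}|\nabla f|^2\bigr)$$
together with the Caccioppoli estimate and the a priori bound on $f$ yields \eqref{mest02}. For \eqref{mest01}, the choice $\gamma=p$ gives $f^{(p-\gamma)/4}\nabla u = \nabla u$ and hence $\nabla\bigl(f^{(p-\gamma)/4}\nabla u\bigr)=\nabla^2\ln w$; the admissibility condition $p<3+\tfrac{p-1}{n-1}$ rearranges algebraically to $p<3+\tfrac{2}{n-2}$, which is the stated hypothesis.
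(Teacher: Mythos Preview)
Your plan is the paper's proof: the same Bochner computation tested against $|\nabla u|^{p-\gamma}\phi^2$, the same refined Kato inequality (the paper's Lemma~2.1) to reach the range $\gamma<3+\tfrac{p-1}{n-1}$, and the same closure via the Cheng--Yau gradient bound together with Bishop--Gromov volume comparison for the factor $e^{\sqrt\kappa r}$. The one substantive omission is regularity: since $w$ is a priori only $C^{1,\alpha}$, neither the Bochner identity nor the pointwise formula $\Delta u=-(p-1)f-\tfrac{p-2}{2}f^{-1}\langle\nabla u,\nabla f\rangle$ is directly available (the latter is singular on $\{\nabla u=0\}$), so the paper works instead with smooth solutions $u^\epsilon$ of the regularized equation $\Delta u^\epsilon+(p-2)\tfrac{\Delta_\infty u^\epsilon}{|\nabla u^\epsilon|^2+\epsilon}=|\nabla u^\epsilon|^2$, derives all estimates uniformly in $\epsilon$, and passes to the limit.
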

Here and throughout the paper for domains $A$ and $B$, the notation $A\Subset B$ stands for that $A$
is a bounded subdomain of $B$ and its closure $A \subset B$.

Recall that if $(M^n,g)$ is flat, that is, the {\color{black}Euclidean} space $\rn$,   $ p$-harmonic functions $w$  in a domain $\Omega\subset \rn$ are proved to satisfy
 $|\nabla w|^{\frac{p-\gz}2}\nabla w\in W^{1,2}_\loc(\Omega)$ with some quantative bound
whenever   $\gz<3+\frac{p-1}{n-1}$
  see    \cite{mw88,im89,dpzz,ss20} and also the {\color{black}references} therein for some earlier partial results.
In particular,  if $1<p<3+\frac2{n-2}$,    noting $p<3+\frac{p-1}{n-1}$ and taking $\gz=p$, one has  $w\in W^{2,2}_\loc(\Omega)$.
When $n\ge3$ and $p\ge 3+\frac2{n-2}$,  it is not clear whether
  $w\in W^{2,2}_\loc(\Omega)$  or not.  When $n=2$, the range $\gz< 3+\frac{p-1}{n-1}=p+2$ is optimal as  {\color{black}witnessed} by some construction in \cite{im89}.

Moreover, we extend Theorem \ref{thm0} to weighted {\color{black}manifolds}   satisfying Bakry-\'{E}mery curvature-dimension  condition,

 \begin{thm}\label{thm2} Let $(M^n,g,e^{-h}{\rm vol}_g)$ be a
weighted manifold with  $Ric_h^N\ge -\kz$ for some $n\le N<\fz$ and $\kz\ge0$.
Let
$1<p<\fz$ and $\gz<3+ \frac{p-1}{N-1}$.
For any positive weighted $p$-harmonic function $w$ in a domain $\Omega\subset M$, we have
 $|\nb \ln w|^{\frac{p-\gz}{2}}\nb \ln w\in W^{1,2}_\loc(\Omega)$ and
\begin{align} \label{mest2}
\bint_{B(z,r)}\lf| \nb [|\nb \ln w|^{\frac{p-\gz}{2}}\nb \ln w ]\lr |^2\, d{\rm vol}_h
& \le   C(n,N,p,\gz) \left[\frac{1+\sqrt \kz r}r\right]^{p-\gz+4} 
e^{\sqrt{\kz}r}\
\end{align}
 whenever $  B(z,4r)\Subset \Omega$.

 In particular, if $p\in(1,3+\frac{2}{N-2})$, then
   $ \nb^2\ln w\in L^{2}_\loc(\Omega)$   and
\begin{align} \label{mest1}
\bint_{B(z,r)}|\nb^2 \ln w|^2\,  d{\rm vol}_h
& \le   C(n, N,p)  \left[\frac{ 1+\sqrt{\kz} r }{r }\right]^4  e^{\sqrt{\kz}r}
\end{align}
whenever  $  B(z,4r)\Subset \Omega$.
  \end{thm}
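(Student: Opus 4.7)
The plan is to mirror the proof of Theorem~\ref{thm0} in the weighted setting, systematically replacing the classical Bochner identity by its Bakry-\'{E}mery counterpart under $Ric_h^N\ge -\kz$, with the effective dimension $N$ playing the role of the topological dimension $n$. The two essential inputs are: (i) the weighted Bochner identity
\begin{equation*}
\tfrac{1}{2}\Delta_h |\nb v|^2 = |\nb^2 v|^2 + \la \nb v,\nb \Delta_h v\ra + Ric_h(\nb v,\nb v),
\end{equation*}
together with the refined trace-type inequality $|\nb^2 v|^2\ge \tfrac{(\Delta_h v)^2}{N} - \tfrac{\la \nb h,\nb v\ra^2}{N-n}$ (with the usual convention when $N=n$), which combined with $Ric_h^N\ge -\kz$ produces the sharp $N$-dimensional Bochner inequality; and (ii) the pointwise Cheng-Yau type gradient bound \eqref{mest0} of Dung-Dat.

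Setting $v:=\ln w$, a direct calculation in the weighted setting yields $\Delta_{p,h}v=-(p-1)|\nb v|^p$ weakly in $\Omega$. To circumvent the degeneracy at the critical set $\{\nb w=0\}$, I would first regularize by replacing $|\nb w|^{p-2}$ with $(|\nb w|^2+\ez)^{(p-2)/2}$, work with the smooth solutions $w_\ez$ of the regularized equation, derive the estimate for $v_\ez:=\ln w_\ez$, and pass to the limit $\ez\to 0$ at the end.

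The heart of the argument is a Caccioppoli-type inequality. I would test the weighted Bochner inequality against $\eta^2(|\nb v|^2+\ez)^{(p-\gz-2)/2}$, where $\eta$ is a Lipschitz cutoff supported in $B(z,2r)$ with $\eta\equiv 1$ on $B(z,r)$ and $|\nb\eta|\ls 1/r$, and integrate by parts against the measure $e^{-h}d{\rm vol}_g$. Using the PDE for $v$ to rewrite $\la\nb v,\nb\Delta_h v\ra$ and $(\Delta_h v)^2$ in terms of $|\nb v|^p$, $|\nb^2 v||\nb v|^{p-1}$, and the drift contribution $\la\nb h,\nb v\ra$, and then invoking the refined $N$-dimensional trace inequality, the computation reduces to a single quadratic form in $(p,\gz,N)$ whose coefficient in front of the term $\int_{B(z,2r)}\eta^2|\nb v|^{p-\gz-2}|\nb^2 v|^2 e^{-h}d{\rm vol}_g$ is strictly positive precisely when $\gz<3+\frac{p-1}{N-1}$. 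One then obtains the bound
\begin{equation*}
\int_{B(z,r)}|\nb v|^{p-\gz-2}|\nb^2 v|^2 e^{-h}d{\rm vol}_g\ls\int_{B(z,2r)} \lf[\tfrac{1}{r^2}+\kz\lr]|\nb v|^{p-\gz+2} e^{-h}d{\rm vol}_g.
\end{equation*}
Inserting \eqref{mest0} into the right-hand side, using the weighted Bishop-Gromov volume comparison (which produces the factor $e^{\sqrt\kz r}$), and noting $|\nb(|\nb v|^{(p-\gz)/2}\nb v)|^2\simeq |\nb v|^{p-\gz-2}|\nb^2 v|^2$, yields \eqref{mest2}. The specialization $\gz=p$, admissible exactly when $p<3+\tfrac{2}{N-2}$, immediately gives \eqref{mest1}.

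The main technical obstacle is the algebraic balancing that produces the sharp threshold $\gz<3+\frac{p-1}{N-1}$: one must simultaneously retain the full $|\nb^2 v|^2$ on the left, absorb the bad $(\Delta_h v)^2$ contribution via the refined trace inequality (which is where $N-1$ enters the denominator), and control the drift cross-terms $\la\nb h,\nb v\ra^2$ using the extra slack provided by $Ric_h^N\ge -\kz$. Once this positivity condition is verified, the remaining steps—cutoff construction, $\ez$-regularization, integration by parts in the weighted measure, and invocation of \eqref{mest0}—are entirely parallel to the Riemannian case treated in Theorem~\ref{thm0}.
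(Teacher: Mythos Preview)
Your overall architecture---regularize, apply the weighted Bochner formula, test against $(|\nb v|^2+\ez)^{(p-\gz)/2}$ times a cutoff, then insert the Cheng--Yau bound \eqref{mest0} and the volume comparison---matches the paper. The gap is in the step you call ``the heart of the argument'': the trace-type inequality you invoke, $|\nb^2 v|^2\ge (\Delta_h v)^2/N - \la\nb h,\nb v\ra^2/(N-n)$, is the standard Bakry--\'Emery Cauchy--Schwarz and it is \emph{not} strong enough to force positivity of the relevant quadratic form all the way up to $\gz<3+\tfrac{p-1}{N-1}$. The paper addresses exactly this point in Remark~3.7: with only inequalities of that strength (as in \cite{wz10,dd16}) one can close the estimate for, say, $\gz\le 2$, but not for the full sharp range. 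Your assertion that ``the coefficient is strictly positive precisely when $\gz<3+\tfrac{p-1}{N-1}$'' is the whole content of the theorem and does not follow from the ingredients you listed.

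What the paper actually uses is the finer pointwise inequality of Lemma~\ref{keylem3},
\[
|\nb v|^4|\nb^2 v|^2 \ \ge\ 2\,|\nb v|^2\,|\nb^2 v\,\nb v|^2 \;+\; \frac{(|\nb v|^2\Delta v - \Delta_\infty v)^2}{n-1} \;-\; (\Delta_\infty v)^2,
\]
which keeps the three second-order quantities $|\nb^2 v|^2$, $|\nb^2 v\,\nb v|^2/|\nb v|^2$ and $(\Delta_\infty v)^2/|\nb v|^4$ separated rather than collapsing them into a single $(\Delta v)^2$. After substituting the regularized equation $\Delta_h u^\ez=|\nb u^\ez|^2-(p-2)\Delta_\infty u^\ez/(|\nb u^\ez|^2+\ez)$ and splitting off the drift via the Young parameter $\eta_1=(N-n)/(N-1)$, one obtains Lemma~\ref{keylem2} and then Lemma~\ref{keylem1}. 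The positivity check that delivers the sharp threshold is the computation $H(0)>0$ in the proof of Lemma~\ref{keylem1}; it relies critically on the coefficient $2$ in front of $|\nb^2 v\,\nb v|^2$ and the coefficient $\tfrac{(p-1)^2}{N-1}-1$ in front of $(\Delta_\infty v)^2$, both of which come from Lemma~\ref{keylem3} and are lost if one only uses the crude $(\Delta_h v)^2/N$ bound.
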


As a consequence of Theorem \ref{thm0} and Theorem \ref{thm2}, one gets that $|\nb \ln w|^{\frac{p-\gz+2}2}\in W^{1,2}_\loc$ for $\gz<3+\frac{p-1}{n-1}$ or $\gz<3+\frac{p-1}{N-1}$, {\color{black}while in \cite{wz10,dd16}, one has $|\nb \ln w|^{\frac{p-\gz+2}2}\in W^{1,2}_\loc$ for all $\gz <2$ (see \eqref{secor} and the line above \eqref{mest0}). Thus our range for $\gz$ obviously  improves the one obtained in \cite{wz10,dd16} respectively.}

Now we sketch the ideas to prove Theorem \ref{thm0} and Theorem \ref{thm2}.
Note that when $N=n$ and $h\equiv 1$, we have $Ric_h^N=Ric_g$, and hence
 Theorem \ref{thm0} corresponds to the special case $N=n$ and $h\equiv1$ in  Theorem \ref{thm2}.
We only need to prove Theorem \ref{thm2}.
%
As usual, we approximate $u=-(p-1)\ln w$ by  smooth  solution  $u^\ez$ to
  the standard approximation/regularized equation \eqref{plap1}, that is,
$$ e^h\ddiv(e^{-h}[|\nb u^\ez|^2+\ez]^{\frac{p-2}2}\nb u^\ez)= [|\nb u^\ez|^2+\ez]^{\frac{p-2}2}|\nb u^\ez|^2 . $$
\begin{enumerate}
\item[(i)]
Using Bochner formula and the approximation equation \eqref{plap1}, for $0<\eta<1/2$ we bound  the {\color{black}integral} of
\begin{equation}\label{xxx}(1-\eta)|\nb^2u^\ez|^2 +    (p-\gz)    \frac{|\nb^2u^\ez \nb u^\ez|^2}{|\nb u^\ez|^2+\ez}
 + (p-2) (2-\gz)    \frac{(\bdz_\fz u^\ez)^2} {[|\nb u^\ez|^2+\ez]^2}
 \end{equation}
from above by
the {\color{black}integral} of $$Ric_g(\nb u^\ez,\nb u^\ez)+\la\nb^2 h\nb u^\ez,\nb u^\ez\ra$$  and other first order {\color{black}terms}, where all {\color{black}integrals} are taken against    $[|\nb u^\ez|^2+\ez]^{  \frac{p-\gz}2  }\phi^2 \,e^{-h}d{\rm vol}_g$ {\color{black} where $\phi \in C_c^\fz(U)$ is a test function and $U \Subset \Omega$}; see Lemma \ref{divp-gz0}.
{\color{black}Here in \eqref{xxx} and in what follows, for any $C^2$ function $f$, $\bdz_\fz f := \la\nabla^2 f \nabla f, \nabla f\ra$.}

\item[(ii)] If $\gz<3+\frac{p-1}{N-1}$, via a fundamental inequality  given in Lemma \ref{keylem3} and the approximation equation \eqref{plap1},  for sufficiently small $\eta>0$ we bound \eqref{xxx} {\color{black} as below}
 $$  \eqref{xxx}\ge
\eta |\nb^2u^\ez|^2-\frac{\la \nb h,\nb u^\ez\ra^2}{N-n}-C\frac1\eta|\nb u^\ez|^4 \quad\mbox{ everywhere};$$
see Lemma \ref{keylem1}. This is crucial to get Theorem \ref{thm2}. Note that  the approach in \cite{wz10,dd16} could not give Lemma \ref{keylem1}; see Remark \ref{finalrmk} for details.

\item[(iii)] Combining (i)\&(ii) together,  the {\color{black}integral} of $ \eta |\nb^2 u^\ez|^2  $  is {\color{black}bounded} from above by
the {\color{black}integral} of $-Ric_h^N(\nb u^\ez,\nb u^\ez)  $ and other first order {\color{black}terms},
where  all  {\color{black}integrals} are taken against   $[|\nb u^\ez|^2+\ez]^{  \frac{p-\gz}2  }\phi^2 \,e^{-h}d{\rm vol}_g$; see Corollary \ref{cor1}.

Under the assumption $ Ric_h^N\ge -\kz$,  in Lemma \ref{fi} we obtain an upper $L^2_\loc$ bound for
$\nb [|\nb u^\ez|^{\frac{p-\gz}2}\nb u^\ez]\phi$ by the {\color{black}integral} of some first order terms, where
 all {\color{black}integrals} are against $\,e^{-h}d{\rm vol}_g$.  
A standard argument then leads to the proof of  Theorem \ref{thm2}.
\end{enumerate}

Finally, we also notice that the Cheng-Yau  gradient estimate \eqref{cyg}
was generalized to positive harmonic functions $w$ in Alexandrov spaces with curvature bounded from below by Zhang-Zhu in \cite{zz12},
where the authors showed $|\nb \ln w|^2 \in W^{1,2}_\loc(\Omega)$  as a key step. {\color{black}Furthermore, one could study the regularity of $p$-harmonic functions in more general metric measure spaces. In these spaces, a natural generalization of the (weighted) Ricci curvature bound is the curvature-dimension condition $RCD(\kz, N)$ in the sense of Bakry-\'{E}mery or Ambrosio-Gigli-Savar\'{e}. The two senses turned out to be equivalent by the work of Erbar-Kuwada-Sturm \cite{eks15} (in the finite dimensional case) and Ambrosio-Gigli-Savar\'{e} \cite{ags15} and the spaces satisfying one of the two equivalent conditions are known as $RCD(\kz, N)$ spaces.}
Some progress was made in
$RCD(\kz, N)$ spaces. The
Cheng-Yau  gradient estimate was established by Jiang in \cite{j} for positive harmonic functions $w$ in $RCD(\kz, N)$ spaces;
recently, Gigli-Violo in \cite{gv21} established $|\nb \ln w|^{\bz/2} \in W^{1,2}_\loc(\Omega)$  under $RCD(0,N)$ spaces if $\bz>\frac{N-2}{N-1}$.
However, when $ p\ne 2$, it remains  open to prove the Cheng-Yau type gradient estimates for positive $p$-harmonic functions in  Alexandrov  spaces and
also $RCD (\kz,N)$ spaces.

\section{Preliminaries}


Let $n\ge2$ and  $M^n$ be a  Riemannian manifold, and $g$ be the Riemannian metric.
By abuse of notation we also write $|\xi|^2=g(\xi,\xi)$ and $\langle\xi,\eta\rangle=g(\xi,\eta)$ for all $ \xi,\eta\in T_xM^n$.
The corresponding Riemannian volume measure is written as $d{\rm vol}_g$,
and the volume of a set $E$ is written as ${\rm vol}_g(E)$.
Denote by  $Ric_g$ the Ricci curvature 2-tensor and write $Ric_g\ge -\kz$
if $Ric_g(\xi,\xi)\ge -\kz |\xi|^2$ for all $ \xi\in T_xM^n$.

For $1<p<\fz$, the $p$-Laplace operator $\Delta_p$ in $M^n$ is given by
 $$\Delta_p f={\rm div}(|\nb f|^{p-2}\nb f )\ \mbox{  $\forall f\in C^2(M^n)$}. $$
Obviously, $\Delta_2$ is exactly the Laplace-Beltrami operator $\Delta$ in $(M^n,g)$.
A function $w$ defined in a domain $\Omega\subset M^n$ is called $p$-harmonic if
$w\in W^{1,p}_\loc(\Omega)$ is a weak solution to the $p$-Laplace equation $\Delta_pw=0$ in $\Omega$, that is,
$$\int_{\Omega}|\nb w|^{p-2}\langle \nabla w,\nabla \phi\rangle d{\rm vol}_g=0\quad\forall \phi\in C^\fz_c(\Omega).$$
Note that
 $2$-harmonic functions are the well-known harmonic functions.

Next we recall some basic facts of    weighted Riemannian manifolds $(M^n,g,e^{-h}d{\rm vol}_g)$, where  the weight $h $ is a positive smooth  function in $M^n$.
The weighted measure $d{\rm vol}_h=e^{-h} d{\rm vol}_g$ can be viewed as the volume form of a suitable conformal change of the metric $g$.
Denote by  ${\rm vol}_h(E)$  the weighted volume of a set $E$.
For $n\le N<\fz$, the corresponding $N$-Bakry-\'{E}mery curvature tensor is
$$  Ric^N_h= Ric_g + \nb^2 h- \frac{\nb h \otimes \nb h}{N-n},$$
where when $N=n$, by convention, $h$ is {\color{black}a} constant function and hence  $Ric^N_h=Ric_g$. 
We say that $(M^n,g,e^{-h}d{\rm vol}_g)$ satisfies the Bakry-\'{E}mery  curvature-dimension condition
$Ric_h^N \ge-\kz$ if  $$Ric_h^N(\xi,\xi)=Ric_g(\xi,\xi) + \langle\nb^2 h \xi,\xi \rangle- \frac{\langle \nb h, \xi\rangle^2}{N-n} \ge-\kz \langle\xi,\xi\rangle\ \forall \xi\in T_xM^n$$
By \cite{q97}, under $Ric^N_h \ge -\kz$, one has the following volume comparison result
\begin{equation}\label{vc}
{\rm vol}_h(B_{2r}(x)) \le C(N)e^{\sqrt{\kz}r} {\rm vol}_h(B_{r}(x))   \quad  \forall x \in M, \ r>0.
\end{equation}


For $1<p<\fz$, the weighted $p$-Laplacian $\bdz_{h,p}$ is defined as
\begin{equation*}
  \bdz_{p,h}f = e^{h} \ddiv (e^{-h} |\nb f|^{ p-2} \nb f) = \bdz_p f - |\nb f|^{p-2}\la \nb f , \nb h\ra \quad \forall f\in C^2(M^n).
\end{equation*}
In the case  $p=2$, one   writes $ \Delta_{2,h }$ as $\Delta_h$,
 and hence $$\Delta_h f=\Delta f-\langle\nb h,\nb f\rangle. $$
A   function $w$ in a domain $\Omega\subset M^n$  is called as a weighted $p$-harmonic function if
  $w \in W^{1,p}_{\loc}(\Omega)$  is a weak solution to the weighted $p$-harmonic equation $\Delta_{p,h}w=0$ in $\Omega$, that is,
  \begin{equation}  \label{defp}
  \int_\Omega  |\nb w|^{p-2} \la \nb w, \nb \phi \ra \,e^{-h}d{\rm vol}_g=0\quad\forall \phi\in C^\fz_c(\Omega).
  \end{equation}
By a density argument, we can relax $\phi\in C^\fz_c(\Omega)$ to $\phi\in W^{1,p}_0(\Omega)$ in \eqref{defp}.

We also recall the following Bochner formula in $(M^n,g,e^{-h}d{\rm vol}_g)$:
\begin{equation}\label{Boch}
    \frac12 \bdz_h |\nb f|^2 = |\nb^2 f|^2+  \la \nb f, \nb \bdz_h f \ra +
 Ric _g (\nb f, \nb f)+  \langle \nb^2 h\nabla f,\nb f\rangle \quad\forall f\in C^3(M),
  \end{equation}
which will be used in Section 3.

 Finally, we recall the following fundamental inequality; see for example \cite{wz10,dd16,ss20}. 
For {\color{black}the reader's convenience we include it} here.
  Recall   that    $\Delta_\fz f=\langle \nabla^2 f  \nabla f,\nabla f\rangle .$

\begin{lem}\label{keylem3}
Let $n\ge2$ and $\Omega$ be a domain of $M^n$. For any $f\in C^2(\Omega)$, we have
 \begin{align}\label{keyin1}
 |\nb f|^4 |\nb^2 f|^2 \ge 2  |\nb f|^2 | \nb^2 f \nb f|^2+ \frac{[|\nb f|^2 \bdz f-\bdz_\fz f]^2}{n-1}-(\bdz_\fz f)^2 \ \mbox{in}\ \Omega,
 \end{align}
where when $n=2$, ``$\ge$" becomes  ``$=$".
\end{lem}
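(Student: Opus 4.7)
The statement is purely algebraic at each point, so my plan is to fix an arbitrary $x \in \Omega$ and reduce to a pointwise inequality between the symmetric matrix $A = \nabla^2 f(x)$ and the vector $v = \nabla f(x)$. The case $v = 0$ is trivial since then $\Delta_\infty f = \langle Av, v\rangle = 0$ and $Av = 0$, so both sides reduce to the comparison between $|v|^4 |A|^2 = 0$ and $0$. So I assume $v \neq 0$ and work in an orthonormal frame $\{e_1,\dots,e_n\}$ of $T_xM$ with $e_1 = v/|v|$; write $\rho := |v|$ and $A_{ij} := \langle \nabla^2 f\, e_j, e_i\rangle$ in this frame.

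In this frame the right-hand quantities become $|Av|^2 = \rho^2 \sum_i A_{i1}^2$, $\Delta_\infty f = \rho^2 A_{11}$, and $\Delta f = \sum_i A_{ii}$. Plugging these into \eqref{keyin1} and dividing through by $\rho^4$ reduces the inequality to
\begin{equation*}
\sum_{i,j=1}^n A_{ij}^2 \;\ge\; 2\sum_{i=1}^n A_{i1}^2 + \frac{\bigl(\sum_{i=2}^n A_{ii}\bigr)^2}{n-1} - A_{11}^2 .
\end{equation*}
The first and third sums of the left-hand side capture the row/column through the index $1$: by symmetry of $A$,
\begin{equation*}
\sum_{i,j=1}^n A_{ij}^2 = A_{11}^2 + 2\sum_{i=2}^n A_{i1}^2 + \sum_{i,j=2}^n A_{ij}^2 .
\end{equation*}
Substituting this identity and cancelling the terms involving $A_{11}^2$ and $\sum_{i\ge 2} A_{i1}^2$, the desired inequality collapses to
\begin{equation*}
\sum_{i,j=2}^n A_{ij}^2 \;\ge\; \frac{\bigl(\sum_{i=2}^n A_{ii}\bigr)^2}{n-1} .
\end{equation*}

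This is simply Cauchy--Schwarz applied to the lower-right $(n-1)\times(n-1)$ block $B = (A_{ij})_{i,j\ge 2}$: since $\sum_{i,j\ge 2} A_{ij}^2 \ge \sum_{i\ge 2} A_{ii}^2$ and $\bigl(\sum_{i=2}^n A_{ii}\bigr)^2 \le (n-1)\sum_{i=2}^n A_{ii}^2$, the inequality follows. The case $n=2$ makes the block $B$ one-dimensional and there are no off-diagonal terms to discard, so both inequalities above are equalities and \eqref{keyin1} becomes an identity. The only genuine content is thus the elementary Cauchy--Schwarz bound on the trace of an $(n-1)\times(n-1)$ matrix in terms of its Hilbert--Schmidt norm; the rest is just a matter of choosing the adapted orthonormal frame and expanding. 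There is no real obstacle here, only careful bookkeeping of the off-diagonal contributions.
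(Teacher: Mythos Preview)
Your proof is correct and follows essentially the same route as the paper: reduce to a pointwise matrix inequality, choose an orthonormal frame aligned with $\nabla f$, split off the row/column in that direction, and apply the Cauchy--Schwarz (trace versus Hilbert--Schmidt norm) bound to the remaining $(n-1)\times(n-1)$ block. The only cosmetic differences are that the paper aligns with $e_n$ rather than $e_1$ and states the matrix inequality for general $A$ before specializing; your explicit use of the symmetry of the Hessian is in fact what makes the decomposition $\sum_{i,j}A_{ij}^2 = A_{11}^2 + 2\sum_{i\ge 2}A_{i1}^2 + \sum_{i,j\ge 2}A_{ij}^2$ work.
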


\begin{proof}
  It suffices to prove that for any {\color{black}symmetric} $n\times n$ matrix $A$ one has
\begin{align}\label{keyin3}|A|^2|\xi|^4\ge \frac1{n-1}({\rm tr} A |\xi|^2 -\langle A\xi ,\xi\rangle)^2+2|A\xi  |^2|\xi|^2-\langle A\xi  ,\xi\rangle^2 \quad  \forall \xi\in\rn.\end{align}
Note that if $\xi=0$, \eqref{keyin3} holds obviously.
Below assume that   $\xi\ne 0$.
Up to a scaling  we may assume  $|\xi|=1$. By  a change of coordinates,
 we may further assume $\xi=e_n=(0,\cdots,0,1)$; in this case, \eqref{keyin3} reads as
\begin{align*}
|A|^2 \ge \frac1{n-1}({\rm tr} A  -\langle A e_n,e_n\rangle)^2+2|Ae_n  |^2 -\langle Ae_n  ,e_n\rangle^2.
\end{align*}
Denoting by  $A_{n-1}$   the $(n-1)$ order principal submatrix of $A$, one has
 $$|A|^2=|A_{n-1}|^2+2|Ae _n|^2-\langle Ae _n,e_n\rangle^2.$$
Noting that $$   |A_{n-1}|^2\ge\frac1{n-1}({\rm tr} A_{n-1})^2  = \frac1{n-1}({\rm tr} A -\langle Ae_n  ,e_n\rangle)^2,$$
where when $n=2$, one has $    |A_{n-1}|^2= ({\rm tr} A_{n-1})^2$,  one concludes  \eqref{keyin1}.
\end{proof}

%
%
%
%
%

\section{Proof of  Theorem \ref{thm2}}

Let $w$ be a positive weighted $p$-harmonic function in a domain $\Omega$.
Set $u= -(p-1)\ln w$. Then $u$ is a weak solution to  the equation
\begin{equation}\label{plap3}
  \bdz_{p} u- |\nb u|^{p-2}\la \nb u , \nb h\ra =  |\nb u|^{p}\quad  \mbox{ in $\Omega$},
\end{equation}
that is,
$$-
\int_{\Omega} |\nb u|^{p-2}\langle \nb u,\nb\phi\rangle e^{-h}d{\rm vol}_g =
\int_{\Omega} |\nb u|^{p }\phi  e^{-h}d{\rm vol}_g \quad \forall \phi\in C^\fz_c(\Omega).$$

Given any smooth domain $U\Subset \Omega$  and $\ez\in (0,1]$,    consider the approximation/regularized equation defined by
\begin{equation}\label{approx}
  e^h\ddiv(e^{-h}[|\nb v|^2+\ez]^{\frac{p-2}2}\nb v)= [|\nb v|^2+\ez]^{\frac{p-2}2} |\nb v|^2 \quad \mbox{ in $U$; $v=u$  on $\partial U$.}
\end{equation}

It is well known that if $u$ is the solution to \eqref{plap3}, then $u\in C^{1,\alpha}(\Omega)$ {\color{black} for some $\az \in(0,1)$}; see \cite{d83,l83,t84,u77}. {\color{black}Moreover, in the following lemma,
we summarize some properties of the solution $u$ to \eqref{plap3} and $u^\ez$ to \eqref{plap1}, which result from \cite{d83} as a special case. See also \cite{t84}.

\begin{lem}\label{d1983}
  For any $\ez \in (0,1]$, there exists a unique solution  $u^\ez\in C^\fz(U )\cap C^0(\overline U )$ to \eqref{plap1}, and moreover, $u^\ez\to u$ in $C^{0}(\overline U)$ and $u^\ez\to u$ in $C^{1,\az}(V)$ uniformly in $\ez>0$
  as $\ez\to0$ for all $V \Subset U$ where $u$ is the solution to \eqref{plap3}.
\end{lem}
To show Lemma \ref{d1983}, we just need to check that equations \eqref{plap3} and \eqref{plap1} are special cases of those considered in \cite{d83}. We put this verification in the appendix.}

{\color{black}By Lemma \ref{d1983}, the solution $u^\ez$ to \eqref{approx} is $C^\fz$, which implies that $u^\ez$ satisfies \eqref{approx} pointwise. Hence by a direct computation, \eqref{approx} is equivalent to}
\begin{equation}\label{plap1}
\bdz_h u^\ez + (p-2)\frac{\bdz_\fz u^\ez}{|\nb u^\ez|^2+\ez} =   |\nb u^\ez|^2 \quad \mbox{ in $U$; $u^\ez=u$  on $\partial U$.} \end{equation}

To prove Theorem \ref{thm2} we first build up the {\color{black}following} upper bound.

\begin{lem} \label{divp-gz0} {\color{black}Let $u^\ez$ be the solution to \eqref{plap1}.}
For any $\gz\in\rr$, $\eta>0$ and $\phi\in C^\fz_c(U)$, we have
 \begin{align}   \label{bypart5}
& \int_U\left\{(1-\eta)|\nb^2u^\ez|^2 +    (p-\gz)    \frac{|\nb^2u^\ez \nb u^\ez|^2}{|\nb u^\ez|^2+\ez}
 + (p-2) (2-\gz)    \frac{(\bdz_\fz u^\ez)^2} {[|\nb u^\ez|^2+\ez]^2} \right\} \nonumber\\
&\quad\quad\quad \times [|\nb u^\ez|^2+\ez]^{  \frac{p-\gz}2  }\phi^2 \,e^{-h}d{\rm vol}_g  \notag     \\
 &\quad\le -\int_U [Ric_g(\nb u^\ez, \nb u^\ez)+\langle \nb^2 h\nabla u^\ez,\nb u^\ez\rangle][|\nb u^\ez|^2+\ez]^{ \frac{p-\gz}2 }\phi^2\,e^{-h}d{\rm vol}_g\notag
     \\
  &  \qquad   +
    C(p,\gz)\frac1\eta\int_{U }      ( [|\nb u^\ez|^2+\ez]^{\frac{p-\gz}2+1 }|\nb \phi|^2 + [|\nb u^\ez|^2+\ez]^{\frac{p-\gz}2 +2}\phi^2)\,e^{-h}d{\rm vol}_g.
\end{align}
\end{lem}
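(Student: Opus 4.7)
The plan is to apply the Bochner identity \eqref{Boch} to the smooth regularized solution $u^\ez$, pair it with the test weight $G^{(p-\gz)/2}\phi^2\,e^{-h}d{\rm vol}_g$ (writing $G:=|\nb u^\ez|^2+\ez$ and $\az:=(p-\gz)/2$ for brevity), and use the approximation equation \eqref{plap1} at two successive stages to eliminate the third-order derivatives of $u^\ez$ that arise. Applied to $u^\ez$, the Bochner formula gives
$$\tfrac12\bdz_h G = |\nb^2 u^\ez|^2 + \la \nb u^\ez,\nb \bdz_h u^\ez\ra + Ric_g(\nb u^\ez,\nb u^\ez) + \la \nb^2 h\,\nb u^\ez,\nb u^\ez\ra.$$

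First I would multiply this identity by $G^\az\phi^2$ and integrate against $e^{-h}d{\rm vol}_g$. On the LHS, integration by parts against the weighted measure, combined with $|\nb G|^2 = 4|\nb^2 u^\ez\nb u^\ez|^2$, produces $-2\az\int G^{\az-1}|\nb^2 u^\ez\nb u^\ez|^2\phi^2$ together with a cutoff remainder $-\int G^\az\phi\la\nb G,\nb\phi\ra$. For the mixed term, I would substitute $\bdz_h u^\ez = |\nb u^\ez|^2-(p-2)\bdz_\fz u^\ez/G$ from \eqref{plap1} to obtain
$$\la\nb u^\ez,\nb \bdz_h u^\ez\ra = 2\bdz_\fz u^\ez-(p-2)\la\nb u^\ez,\nb(\bdz_\fz u^\ez/G)\ra,$$
and then integrate the remaining third-order piece by parts against the vector field $X = G^\az\phi^2\nb u^\ez$, using $e^h\ddiv(e^{-h}X) = G^\az\phi^2\bdz_h u^\ez + \la\nb(G^\az\phi^2),\nb u^\ez\ra$. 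A second application of \eqref{plap1} inside this step (to replace $\bdz_h u^\ez$) produces a contribution $-(p-2)^2\int G^{\az-2}(\bdz_\fz u^\ez)^2\phi^2$, together with $+2\az(p-2)\int G^{\az-2}(\bdz_\fz u^\ez)^2\phi^2$ arising from $\la\nb G,\nb u^\ez\ra = 2\bdz_\fz u^\ez$ when differentiating the weight $G^\az\phi^2$, plus first-order residuals of the form $\bdz_\fz u^\ez G^\az\phi^2$, $\ez\bdz_\fz u^\ez G^{\az-1}\phi^2$, and $G^{\az-1}\phi\bdz_\fz u^\ez\la\nb\phi,\nb u^\ez\ra$.

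Collecting all contributions, the coefficients of $(\bdz_\fz u^\ez)^2 G^{\az-2}\phi^2$ combine via
$$-(p-2)^2+2\az(p-2) = (p-2)\bigl(2\az-(p-2)\bigr) = (p-2)(2-\gz),$$
which is exactly the desired coefficient on the LHS of \eqref{bypart5}; similarly $2\az = p-\gz$ multiplies $\int G^{\az-1}|\nb^2 u^\ez\nb u^\ez|^2\phi^2$, while the Ricci and Hessian-of-$h$ terms are transferred directly to the RHS with a minus sign. It remains to estimate the leftover first-order cross terms using the pointwise bounds $|\bdz_\fz u^\ez|\le |\nb^2 u^\ez||\nb u^\ez|^2\le |\nb^2 u^\ez|G$ and $|\nb G|\le 2|\nb^2 u^\ez|G^{1/2}$: Young's inequality with parameter $\eta$ splits each such term into a piece $\eta|\nb^2 u^\ez|^2 G^\az\phi^2$ (absorbed into the LHS to yield the coefficient $1-\eta$) and bulk pieces $C(p,\gz)\eta^{-1}(G^{\az+1}|\nb\phi|^2 + G^{\az+2}\phi^2)$. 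Since $\ez\le G$, the $\ez$-dependent residual is dominated by $G^{\az+2}\phi^2$, so \eqref{bypart5} follows after renaming $\eta$.

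The main obstacle will be the clean algebraic cancellation $-(p-2)^2+2\az(p-2) = (p-2)(2-\gz)$: this identity is precisely what produces the coefficient $(2-\gz)$ on the LHS, and it requires the two successive applications of \eqref{plap1} to align with the correct signs arising from the integration by parts against $e^{-h}d{\rm vol}_g$ and from the differentiation of the weight $G^\az\phi^2$. A secondary technical point is that several cross terms a priori carry mixed $G$-weights (e.g.\ $G^{\az+1/2}|\nb^2 u^\ez|\phi|\nb\phi|$); Young's inequality must be applied carefully so that the final RHS features only the two clean weights $G^{\az+1}|\nb\phi|^2$ and $G^{\az+2}\phi^2$ appearing in \eqref{bypart5}, with no contamination by higher powers of $G$.
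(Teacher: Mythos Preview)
Your proposal is correct and follows essentially the same approach as the paper: both start from the weighted Bochner formula \eqref{Boch} applied to $u^\ez$, pair it with the weight $G^{(p-\gz)/2}\phi^2$, use the approximation equation \eqref{plap1} twice to produce the coefficient $(p-2)(2-\gz)$ on $(\bdz_\fz u^\ez)^2/G^2$, and absorb the residual cross terms via Young's inequality. The only organizational difference is that the paper first isolates a clean integral identity (Lemma~3.2), in which $(\bdz_h u^\ez)^2$ appears explicitly, and then bounds $(\bdz_h u^\ez)^2$ by $(p-2)^2(\bdz_\fz u^\ez)^2/G^2$ plus error, whereas you substitute the equation directly into $\la\nb u^\ez,\nb\bdz_h u^\ez\ra$ and integrate the third-order piece by parts in one sweep; your algebraic identity $-(p-2)^2+2\az(p-2)=(p-2)(2-\gz)$ is exactly the counterpart of the paper's combination $(p-2)^2-(p-\gz)(p-2)$ arising from \eqref{secondterm} and \eqref{bypart1}.
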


To prove this, we need the following identity.

\begin{lem} {\color{black}For any $v \in C^3(U)$} and $\psi\in C^\fz_c(U)$, one has
\begin{align}\label{intformula}
 \int_U |\nb^2v|^2
\psi\,e^{-h}d{\rm vol}_g
  &  =-\int_U\la \nb^2v \nb v - \bdz_h v \nb v, \nb\psi \ra \,e^{-h}d{\rm vol}_g+\int_U  (\bdz_h v)^2
\psi\,e^{-h}d{\rm vol}_g\nonumber \\
&  \quad-\int_U[  Ric_g(\nb v, \nb v)+\langle \nb^2 h\nabla v,\nb v\rangle]
\psi\,e^{-h}d{\rm vol}_g.
\end{align}

\end{lem}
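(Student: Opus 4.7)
The identity is essentially the integrated Bochner formula, so the plan is to start from \eqref{Boch} applied to $f=u^\ez$ (which is smooth on $U$ by elliptic regularity), solve algebraically for $|\nb^2 u^\ez|^2$, and then integrate against $\psi\,e^{-h}d\mathrm{vol}_g$, moving derivatives off the terms that involve $\bdz_h$ by weighted integration by parts.

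More concretely, from the Bochner formula
\[
|\nb^2 u^\ez|^2 = \tfrac12 \bdz_h |\nb u^\ez|^2 - \la \nb u^\ez,\nb \bdz_h u^\ez\ra - Ric_g(\nb u^\ez,\nb u^\ez) - \la \nb^2 h\,\nb u^\ez,\nb u^\ez\ra,
\]
multiplying by $\psi\,e^{-h}d\mathrm{vol}_g$ and integrating leaves two non-trivial terms to handle. For the first, the weighted self-adjointness of $\bdz_h$ gives
\[
\int_U \tfrac12 \bdz_h|\nb u^\ez|^2\,\psi\,e^{-h}d\mathrm{vol}_g = -\int_U \tfrac12 \la \nb|\nb u^\ez|^2,\nb \psi\ra\,e^{-h}d\mathrm{vol}_g = -\int_U \la \nb^2 u^\ez\,\nb u^\ez,\nb \psi\ra\,e^{-h}d\mathrm{vol}_g,
\]
using the pointwise identity $\nb|\nb u^\ez|^2 = 2\,\nb^2 u^\ez\,\nb u^\ez$. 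For the second, write $\la\nb u^\ez,\nb \bdz_h u^\ez\ra\psi = \la \psi\,\nb u^\ez,\nb \bdz_h u^\ez\ra$ and integrate by parts against the weighted measure, noting that the weighted divergence of $\psi\,\nb u^\ez$ is $\la \nb \psi,\nb u^\ez\ra + \psi\,\bdz_h u^\ez$. This produces
\[
\int_U \la \nb u^\ez,\nb \bdz_h u^\ez\ra\,\psi\,e^{-h}d\mathrm{vol}_g = -\int_U \bdz_h u^\ez\,\la\nb \psi,\nb u^\ez\ra\,e^{-h}d\mathrm{vol}_g - \int_U (\bdz_h u^\ez)^2\,\psi\,e^{-h}d\mathrm{vol}_g.
\]

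Combining the two gradient-of-$\psi$ contributions as $-\la \nb^2 u^\ez\,\nb u^\ez-\bdz_h u^\ez\,\nb u^\ez,\nb \psi\ra$ then yields exactly \eqref{intformula}. Since $u^\ez$ is smooth on $U$ and $\psi\in C^\fz_c(U)$, no boundary terms appear and every integration by parts is justified without further approximation; the only point of care is to use the weighted (rather than the plain) divergence theorem so that the $e^{-h}$ factor is respected. I anticipate no real obstacle beyond bookkeeping, as the computation reduces to two applications of weighted integration by parts combined with the pointwise Bochner identity.
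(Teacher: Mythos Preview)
Your proposal is correct and follows essentially the same approach as the paper: both start from the weighted Bochner formula \eqref{Boch} applied to $u^\ez$, then perform two weighted integrations by parts, one on $\tfrac12\bdz_h|\nb u^\ez|^2$ and one involving $\la\nb u^\ez,\nb\bdz_h u^\ez\ra$. The only cosmetic difference is that the paper first adds and subtracts $(\bdz_h u^\ez)^2$ pointwise and rewrites $-[(\bdz_h u^\ez)^2+\la\nb u^\ez,\nb\bdz_h u^\ez\ra]$ as the weighted divergence $-e^h\ddiv(e^{-h}\bdz_h u^\ez\,\nb u^\ez)$ before integrating, whereas you obtain the $(\bdz_h u^\ez)^2$ term directly from the weighted divergence of $\psi\,\nb u^\ez$; the computations are equivalent.
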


\begin{proof}
Applying the Bochner formula to $v$, one has
\begin{equation*} 
|\nb^2 v|^2
+Ric _g (\nb v, \nb v)=
    \frac12 \bdz_h |\nb v|^2 -  \la \nb v, \nb \bdz_h v \ra -
   \langle \nb^2 h\nabla v,\nb v\rangle
  \end{equation*}
and hence
\begin{align*}
 |\nb^2 v|^2
&=
    [\frac12 \bdz_h |\nb v|^2 -(\bdz_h v)^2 -  \la \nb v, \nb \bdz_h v \ra] +   (\bdz_h v)^2\\
&\quad-[Ric _g (\nb v, \nb v)+ \langle \nb^2 h\nabla v,\nb v\rangle].
  \end{align*}
By this, to get \eqref{intformula}, it suffices to show the following identity
\begin{align}\label {intbypart}
& \int_U   [\frac12 \bdz_h |\nb v|^2 -(\bdz_h v)^2 -  \la \nb v, \nb \bdz_h v \ra] \psi e^{-h}d{\rm vol}_g\nonumber\\
&\quad=-\int_U\la \nb^2v \nb v - \bdz_h v \nb v, \nb\psi \ra \,e^{-h}d{\rm vol}_g.  \end{align}

Note that
\begin{align*} -[( \bdz_h  v)^2+\langle   \nb  v  ,  \nb (\bdz_h  v) \ra]&= -e^{h}{\rm div}(e^{-h} \nb  v)  (\bdz_h  v)   -e^{h}\langle  e^{-h} \nb  v  ,  \nb (\bdz_h  v) \ra\\
&= -e^{h}{\rm div}(e^{-h} \nb  v  \bdz_h  v).
\end{align*}
Via integration by parts, one has
\begin{align*} -\int_U[( \bdz_h  v)^2+\langle   \nb  v  ,  \nb (\bdz_h  v) \ra]\psi  \,e^{-h}d{\rm vol}_g
&=-\int_U {\rm div}(e^{-h} \nb  v  \bdz_h  v)\psi  \, d{\rm vol}_g\\
&=
 \int_U\la  \Delta_hv\nb v ,\nb\psi\rangle \,e^{-h}d{\rm vol}_g.
\end{align*}
Similarly, via integration by parts one also has
\begin{align*}\frac12\int_U \bdz_h |\nb v|^2\psi e^{-h}\,d{\rm vol}_g&=
\int_U \frac12{\rm div}(e^{-h}\nb |\nb v|^2)\psi  \,d{\rm vol}_g\\
&= -
\int_U   \frac12\langle e^{-h}\nb |\nb v|^2,\nb\psi \rangle\,d{\rm vol}_g\\
&= -\int_U    \langle \nb^2 v   \nb v,\nb\psi \rangle\,e^{-h}d{\rm vol}_g.
\end{align*}
Combining together we obtain \eqref{intbypart} and hence, \eqref{intformula} as desired.
\end{proof}

We are ready prove Lemma \ref{divp-gz0} as below.

\begin{proof}[Proof of Lemma \ref{divp-gz0}]
 Taking {\color{black}$v=u^\ez$ and} $\psi= [|\nb u^\ez|^2+\ez]^{ \frac{p-\gz}2 }\phi^2$ in \eqref{intformula} we get
\begin{align}   \label{bypart0}
 \int_U &|\nb^2u^\ez|^2 [|\nb u^\ez|^2+\ez]^{ \frac{p-\gz}2 }\phi^2\,e^{-h}d{\rm vol}_g \notag\\
  &  =-\int_U\la \nb^2u^\ez \nb u^\ez - \bdz_h u^\ez \nb u^\ez, {\color{black}\nb[[}|\nb u^\ez|^2+\ez]^{ \frac{p-\gz}2 }\phi^2] \ra \,e^{-h}d{\rm vol}_g \notag\\
&\quad + \int_U (\bdz_h u^\ez)^2 [|\nb u^\ez|^2+\ez]^{ \frac{p-\gz}2 }\phi^2\,e^{-h}d{\rm vol}_g \notag\\
 &\quad-\int_U[ Ric(\nb u^\ez, \nb u^\ez)+\langle \nb^2 h\nabla u^\ez,\nb u^\ez\rangle][|\nb u^\ez|^2+\ez]^{ \frac{p-\gz}2 }\phi^2\,e^{-h}d{\rm vol}_g.
\end{align}

To bound the second term in the {\color{black}right-hand} side in \eqref{bypart0}, recalling {\color{black}\eqref{plap1}, that is,}
 \begin{equation}\label{xxxx}\bdz_h u^\ez=  |\nb u^\ez|^2 - (p-2) \frac{\bdz_\fz u^\ez}{|\nb u^\ez|^2+\ez},
\end{equation}
by Cauchy-Schwarz'{\color{black}s} inequality one has
\begin{align*}
(\bdz_h u^\ez)^2\le   (p-2)^2 \frac{(\bdz_\fz u^\ez)^2 }{[|\nb u^\ez|^2+\ez]^2}+
\frac\eta 4  |\nb^2 u^\ez |^2 + C(p)\frac 1\eta |\nb u^\ez|^4 ,\end{align*}
where $0<\eta<1$ is any constant. Thus
\begin{align}\label{secondterm}
\int_U (\bdz_h u^\ez)^2 [|\nb u^\ez|^2+\ez]^{ \frac{p-\gz}2 }\phi^2\,e^{-h}d{\rm vol}_g&\le
 (p-2)^2 \int_U (\bdz_\fz u^\ez)^2   [|\nb u^\ez|^2+\ez]^{ \frac{p-\gz}2-2 }\phi^2\,e^{-h}d{\rm vol}_g\nonumber\\
&\quad+ \frac\eta 4 \int_U  |\nb^2 u^\ez  |^2   [|\nb u^\ez|^2+\ez]^{ \frac{p-\gz}2 }\phi^2\,e^{-h}d{\rm vol}_g\nonumber\\
&\quad+ \frac{C(p)}\eta \int_U    [|\nb u^\ez|^2+\ez]^{ \frac{p-\gz}2 +2 } \phi^2\,e^{-h}d{\rm vol}_g.
\end{align}

The first term in the right-hand side in \eqref{bypart0}  is further written as
\begin{align}\label{bypart}
&-\int_U\la \nb^2u^\ez \nb u^\ez - \bdz_h u^\ez \nb u^\ez, {\color{black}\nb[[}|\nb u^\ez|^2+\ez]^{ \frac{p-\gz}2 }\phi^2] \ra \,e^{-h}d{\rm vol}_g  \notag \\
 &\quad=    -   (p-\gz) \int_{U}  \frac{|\nb^2u^\ez \nb u^\ez|^2}{|\nb u^\ez|^2+\ez} [|\nb u^\ez|^2+\ez]^{ \frac{p-\gz}2 }\phi^2\,e^{-h}d{\rm vol}_g \notag \\
  &\quad\quad  +
   (p-\gz) \int_{U}  \bdz_h u^\ez  \frac{\bdz_\fz u^\ez}{|\nb u^\ez|^2+\ez} [|\nb u^\ez|^2+\ez]^{  \frac{p-\gz}2 }\phi^2\,e^{-h}d{\rm vol}_g  \notag \\
 &\quad\quad  -
  \int_{U} \la \nb^2u^\ez \nb u^\ez  , \nb \phi^2\ra [|\nb u^\ez|^2+\ez]^{\frac{p-\gz}2}\,e^{-h}d{\rm vol}_g\nonumber\\
 &\quad\quad  +
  \int_{U} \la   \bdz_h u^\ez \nb u^\ez, \nb \phi^2\ra [|\nb u^\ez|^2+\ez]^{\frac{p-\gz}2}\,e^{-h}d{\rm vol}_g.
\end{align}
Using  \eqref{xxxx}
and Cauchy-Schwarz's inequality, we obtain
the following upper bound for the second term in \eqref{bypart}:
\begin{align}  \label{bypart1}
  (p  &-\gz)\int_{U} \bdz_h u^\ez  \frac{\bdz_\fz u^\ez}{|\nb u^\ez|^2+\ez} [|\nb u^\ez|^2+\ez]^{  \frac{p-\gz}2 }\phi^2\,e^{-h}d{\rm vol}_g \notag \\
  & = -(p-\gz)(p-2)\int_{U} (\bdz_\fz u^\ez)^2  [|\nb u^\ez|^2+\ez]^{  \frac{p-\gz}2-2 }\phi^2 \,e^{-h}d{\rm vol}_g \notag \\
  & \quad +(p-\gz)
   \int_{U} \bdz_\fz u^\ez |\nb u^\ez|^2 [|\nb u^\ez|^2+\ez]^{  \frac{p-\gz}2-1 }\phi^2\,e^{-h}d{\rm vol}_g  \notag \\
   & \le   -(p-\gz)  (p-2)   \int_{U} (\bdz_\fz u^\ez)^2  [|\nb u^\ez|^2+\ez]^{  \frac{p-\gz}2-2 }\phi^2 \,e^{-h}d{\rm vol}_g  \notag \\
&\quad+ \frac\eta 4\int_{U}  |\nb ^2 u^\ez |^2   [|\nb u^\ez|^2+\ez]^{  \frac{p-\gz}2  }\phi^2 \,e^{-h}d{\rm vol}_g  \notag \\
   &\quad +\frac{C(p) }{\eta}|p-\gz|^2\int_{U }  [|\nb u^\ez|^2+\ez]^{\frac{p-\gz}2 +2}\phi^2\,e^{-h}d{\rm vol}_g.
\end{align}  For the third term in the right-hand side of \eqref{bypart},
  by {\color{black}Cauchy}-Schwarz's inequality,  one has
  \begin{align} \label{xx2}
& \lf |\int_{U}  \la \nb^2u^\ez \nb u^\ez, \nb \phi^2\ra[|\nb u^\ez|^2+\ez]^{\frac{p-\gz}2}\,e^{-h}d{\rm vol}_g \lr | \notag\\
& \quad\le \frac  \eta  4\int_{U }  |\nb^2 u^\ez|^2
  [|\nb u^\ez|^2+\ez]^{\frac{p-\gz}2 }      \phi^2  \,e^{-h}d{\rm vol}_g  + C\frac1\eta\int_{U }       [|\nb u^\ez|^2+\ez]^{\frac{p-\gz}2+1 }|\nb \phi|^2\,e^{-h}d{\rm vol}_g.
    \end{align}
For  the fourth term in the right-hand side of \eqref{bypart},
 in a similar way, using  \eqref{xxxx},  one has
  \begin{align} \label{xx3}
&\lf |\int_{U} \la  \bdz_h u^\ez \nb u^\ez, \nb \phi^2\ra[|\nb u^\ez|^2+\ez]^{\frac{p-\gz}2}\,e^{-h}d{\rm vol}_g \lr |\notag\\
&\quad= \lf |\int_{U} \left\la    |\nb u^\ez|^2 \nb u^\ez - (p-2) \frac{\bdz_\fz u^\ez}{|\nb u^\ez|^2+\ez} \nb u^\ez, \nb \phi^2\right\ra[|\nb u^\ez|^2+\ez]^{\frac{p-\gz}2}\,e^{-h}d{\rm vol}_g \lr |\notag\\
& \quad\le \frac  \eta  4\int_{U }   |\nb^2u^\ez  |^2
  [|\nb u^\ez|^2+\ez]^{\frac{p-\gz}2 }      \phi^2  \,e^{-h}d{\rm vol}_g \notag \\
      &\quad\quad+ C(p) \frac1\eta\int_{U }      ( [|\nb u^\ez|^2+\ez]^{\frac{p-\gz}2+1 }|\nb \phi|^2 + [|\nb u^\ez|^2+\ez]^{\frac{p-\gz}2 +2}\phi^2)\,e^{-h}d{\rm vol}_g.
    \end{align}
From \eqref{xx3}, \eqref{xx2},  \eqref{bypart1} and \eqref{bypart}  we attain
 \begin{align}\label{bypart3}
&-\int_U\la \nb^2u^\ez \nb u^\ez - \bdz_h u^\ez \nb u^\ez, {\color{black}\nb[[}|\nb u^\ez|^2+\ez]^{ \frac{p-\gz}2 }\phi^2] \ra \,e^{-h}d{\rm vol}_g  \notag \\
 &\quad=   \frac 34  \eta\int_{U}   |\nb^2u^\ez |^2  [|\nb u^\ez|^2+\ez]^{ \frac{p-\gz}2 }\phi^2\,e^{-h}d{\rm vol}_g \notag \\
 &\quad\quad-   (p-\gz)  \int_{U}  \frac{|\nb^2u^\ez \nb u^\ez|^2}{|\nb u^\ez|^2+\ez} [|\nb u^\ez|^2+\ez]^{ \frac{p-\gz}2 }\phi^2\,e^{-h}d{\rm vol}_g \notag \\
  &\quad\quad   -(p-\gz)(p-2) \int_{U} (\bdz_\fz u^\ez)^2  [|\nb u^\ez|^2+\ez]^{  \frac{p-\gz}2-2 }\phi^2 \,e^{-h}d{\rm vol}_g  \notag
     \\
 &\quad\quad
   + \frac{C(p)}\eta\int_{U }      ( [|\nb u^\ez|^2+\ez]^{\frac{p-\gz}2+1 }|\nb \phi|^2 + [|\nb u^\ez|^2+\ez]^{\frac{p-\gz}2 +2}\phi^2)\,e^{-h}d{\rm vol}_g.
\end{align}

Obviously from \eqref{bypart3}, \eqref{secondterm} and  \eqref{bypart0} we conclude \eqref{bypart5}.
\end{proof}

If $\gz<3+\frac{p-1}{N-1}$, we get the following   pointwise lower bound.  Recall that when $N=n$, we always assume that $h$ is a constant function and $\frac{\la \nb u^\ez, \nb h\ra^2}{N-n}=0$.

\begin{lem}\label{keylem1}{\color{black}Let $u^\ez$ be the solution to \eqref{plap1}.}
If $\gz<3+\frac{p-1}{N-1}$ for some $N\ge n$, then for sufficiently small $\eta>0$ we have
\begin{align}\label{lbdd2}
&(1-\eta)|\nb^2u^\ez|^2 +   (p-\gz)    \frac{|\nb^2u^\ez \nb u^\ez|^2}{|\nb u^\ez|^2+\ez}
 + (p-2) (2-\gz)    \frac{(\bdz_\fz u^\ez)^2} {[|\nb u^\ez|^2+\ez]^2}\nonumber\\
&\quad\ge  \eta |\nb^2u^\ez|^2
  -   \frac{\la \nb u^\ez, \nb h\ra^2}{N-n}  - C(n,N,p,\gz)\frac1\eta |\nb u^\ez|^4.
    \end{align}
\end{lem}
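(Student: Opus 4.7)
The goal is to verify \eqref{lbdd2} as a pointwise algebraic inequality. At points where $\nb u^\ez=0$, the equation \eqref{plap1} forces $\bdz_\fz u^\ez=0$ and $|\nb^2 u^\ez\nb u^\ez|=0$, so \eqref{lbdd2} collapses to $(1-2\eta)|\nb^2 u^\ez|^2\ge 0$, valid for $\eta\le 1/2$. Away from the zero set of $\nb u^\ez$, I would apply Lemma \ref{keylem3} to $f=u^\ez$. Using \eqref{plap1} together with $\bdz u^\ez=\bdz_h u^\ez+\la\nb h,\nb u^\ez\ra$, one computes
\begin{equation*}
|\nb u^\ez|^2\bdz u^\ez-\bdz_\fz u^\ez= A+B,\quad A:=|\nb u^\ez|^4-q\,\bdz_\fz u^\ez,\ B:=|\nb u^\ez|^2\la\nb h,\nb u^\ez\ra,
\end{equation*}
where $q:=((p-1)|\nb u^\ez|^2+\ez)/(|\nb u^\ez|^2+\ez)$ interpolates between $1$ and $p-1$.

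The crucial splitting is the AM-GM inequality
\begin{equation*}
(A+B)^2\ge\tfrac{n-1}{N-1}A^2-\tfrac{n-1}{N-n}B^2,
\end{equation*}
which follows since $\tfrac{N-n}{N-1}A^2+2AB+\tfrac{N-1}{N-n}B^2=\bigl(\sqrt{\tfrac{N-n}{N-1}}A+\sqrt{\tfrac{N-1}{N-n}}B\bigr)^2\ge 0$. This converts the $\tfrac{1}{n-1}$ denominator in Lemma \ref{keylem3} into $\tfrac{1}{N-1}$ at the cost of $-|\nb u^\ez|^4\la\nb h,\nb u^\ez\ra^2/(N-n)$, which after dividing by $|\nb u^\ez|^4$ yields exactly the $-\la\nb h,\nb u^\ez\ra^2/(N-n)$ term of \eqref{lbdd2}. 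Expanding $A^2=|\nb u^\ez|^8-2q|\nb u^\ez|^4\bdz_\fz u^\ez+q^2(\bdz_\fz u^\ez)^2$ and combining with the three quadratic terms on the left-hand side of \eqref{lbdd2}, I would trade any negative coefficient of $(\bdz_\fz u^\ez)^2/(|\nb u^\ez|^2+\ez)^2$ against a positive coefficient of $|\nb^2 u^\ez\nb u^\ez|^2/(|\nb u^\ez|^2+\ez)$ using Cauchy--Schwarz $(\bdz_\fz u^\ez)^2\le|\nb^2 u^\ez\nb u^\ez|^2(|\nb u^\ez|^2+\ez)$, while the linear-in-$\bdz_\fz u^\ez$ and pure $|\nb u^\ez|^4$ remainders are absorbed into $\eta|\nb^2 u^\ez|^2+(C/\eta)|\nb u^\ez|^4$ by Young's inequality and $|\bdz_\fz u^\ez|\le|\nb^2 u^\ez|\cdot|\nb u^\ez|^2$. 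Direct algebra then reduces the whole inequality to the nonnegativity of a single overall coefficient, which computes to $(p-1)\bigl[3+\tfrac{p-1}{N-1}-\gz\bigr]$ in the $\ez\to 0$ limit, matching precisely the hypothesis $\gz<3+\tfrac{p-1}{N-1}$.

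The main technical obstacle is maintaining these coefficient estimates uniformly in $\ez>0$, since $q$ is not constant and depends on $\ez/(|\nb u^\ez|^2+\ez)$. The natural strategy is to handle the regimes $|\nb u^\ez|^2\ge\ez$ and $|\nb u^\ez|^2<\ez$ separately: in the former, $(|\nb u^\ez|^2+\ez)$ is comparable to $|\nb u^\ez|^2$ and $q$ is close to $p-1$, so the $\ez=0$ computation applies up to harmless constants; in the latter, $|\nb u^\ez|^2$ is small and the second and third terms on the left-hand side of \eqref{lbdd2} are themselves bounded by a constant multiple of $|\nb^2 u^\ez|^2$ via $|\nb^2 u^\ez\nb u^\ez|^2\le|\nb^2 u^\ez|^2\cdot|\nb u^\ez|^2$, which permits direct verification without invoking the sharp threshold.
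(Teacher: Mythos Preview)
Your overall strategy coincides with the paper's: apply Lemma~\ref{keylem3}, substitute the equation to get $|\nb u^\ez|^2\bdz u^\ez-\bdz_\fz u^\ez=A+B$, use the inequality $(A+B)^2\ge\tfrac{n-1}{N-1}A^2-\tfrac{n-1}{N-n}B^2$ to pass from $n-1$ to $N-1$ (the paper does this via the parameter $\eta_1=(N-n)/(N-1)$ in Lemma~\ref{keylem2}), then trade the $(\bdz_\fz u^\ez)^2$ coefficient against the $|\nb^2u^\ez\nb u^\ez|^2$ coefficient via Cauchy--Schwarz, and absorb the cross terms by Young. Your identification of the limiting coefficient $(p-1)\bigl[3+\tfrac{p-1}{N-1}-\gz\bigr]$ at $\ez=0$ is exactly what emerges.

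The genuine gap is in your handling of the $\ez$-uniformity. The regime split at $|\nb u^\ez|^2=\ez$ is too coarse to close either case. In the regime $|\nb u^\ez|^2<\ez$ the bounds $|\nb^2 u^\ez\nb u^\ez|^2\le|\nb^2 u^\ez|^2|\nb u^\ez|^2$ only give that the second and third terms are bounded by $\tfrac12|p-\gz|\,|\nb^2 u^\ez|^2$ and $\tfrac14|(p-2)(2-\gz)|\,|\nb^2 u^\ez|^2$ respectively, which can swamp the leading $(1-\eta)|\nb^2 u^\ez|^2$ when $p$ or $|\gz|$ is large. In the regime $|\nb u^\ez|^2\ge\ez$ the parameter $q=1+(p-2)t$ with $t=|\nb u^\ez|^2/(|\nb u^\ez|^2+\ez)\in[\tfrac12,1]$ ranges over an interval containing $p/2$, not merely values ``close to $p-1$'', so the $\ez=0$ computation does not transfer ``up to harmless constants''; indeed the relevant scalar coefficient is a genuine quadratic in $t$, and positivity at $t=1$ alone does not give positivity on $[\tfrac12,1]$.

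The paper avoids this by treating the final scalar coefficient $H(0)$ as a quadratic in $t\in[0,1]$ throughout and rewriting it in the basis $\{t^2,\,t(1-t),\,(1-t)^2\}$. One then checks that \emph{all three} resulting coefficients are strictly positive under $\gz<3+\tfrac{p-1}{N-1}$: the $t^2$-coefficient is $(p-1)\bigl[3+\tfrac{p-1}{N-1}-\gz\bigr]$, the $t(1-t)$-coefficient is $(p-1)\tfrac{N}{N-1}$, and the $(1-t)^2$-coefficient is $\tfrac{N}{N-1}$. This yields a uniform lower bound $H(0)\ge\dz(N,p,\gz)>0$ for every $t\in[0,1]$, hence for every $\ez>0$, without any regime splitting. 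Once you carry the algebra to this point the rest of your outline goes through.
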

To prove this,  we need the following pointwise lower bound for  $|\nb^2 u^\ez|^2 |\nb u^\ez|^4$.

\begin{lem}\label{keylem2}{\color{black}Let $u^\ez$ be the solution to \eqref{plap1}.}
If  $N\ge n$, then for $0<\eta<1$ we have
\begin{align}\label{lbdd-x1}
     (1+\eta) |\nb^2 u^\ez|^2 |\nb u^\ez|^4& \ge
2  | \nb^2 u^\ez\nb u^\ez  |^2  |\nb u^\ez|^2 + \left(\frac 1{N-1}\left[ (p-2)\frac{|\nb u^\ez|^2}{|\nb u^\ez|^2+\ez}+1\right]^2-1\right)  (\bdz_\fz u^\ez)^2\notag \\
     & \quad   - (1+\eta) \frac{\la \nb u^\ez, \nb h\ra^2}{N-n}  |\nb u^\ez|^4
 - C(n,N,p)\frac1\eta |\nb u^\ez|^8.
    \end{align}
\end{lem}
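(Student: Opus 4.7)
The plan is to deduce the pointwise estimate from the Euclidean Lemma \ref{keylem3} applied to the smooth function $u^\ez$, and then convert the resulting $(n-1)$-dimensional quantity into its $(N-1)$-dimensional Bakry-\'{E}mery form, using the regularized equation \eqref{plap1} together with two carefully tuned Young-type inequalities. First, apply Lemma \ref{keylem3} with $f=u^\ez$ to obtain
\begin{equation*}
|\nb u^\ez|^4 |\nb^2 u^\ez|^2 \ge 2|\nb u^\ez|^2|\nb^2 u^\ez \nb u^\ez|^2 + \frac{(|\nb u^\ez|^2 \bdz u^\ez - \bdz_\fz u^\ez)^2}{n-1} - (\bdz_\fz u^\ez)^2.
\end{equation*}
Writing $\bdz u^\ez = \bdz_h u^\ez + \la \nb h, \nb u^\ez\ra$ and using $\bdz_h u^\ez = |\nb u^\ez|^2 - (p-2)\frac{\bdz_\fz u^\ez}{|\nb u^\ez|^2+\ez}$ from \eqref{plap1}, decompose $|\nb u^\ez|^2 \bdz u^\ez - \bdz_\fz u^\ez = Y + Z$, where $Y := |\nb u^\ez|^4 - A\bdz_\fz u^\ez$ is the ``clean'' part with $A := (p-2)\frac{|\nb u^\ez|^2}{|\nb u^\ez|^2+\ez}+1$, and $Z := |\nb u^\ez|^2\la \nb h,\nb u^\ez\ra$ is the drift contribution (which vanishes identically when $N=n$).

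The key dimension-raising step is the Young rearrangement $Y^2 = ((Y+Z)-Z)^2 \le (1+\beta)(Y+Z)^2 + (1+\beta^{-1})Z^2$, yielding $(Y+Z)^2 \ge \frac{Y^2}{1+\beta} - \frac{Z^2}{\beta}$. The precisely tuned choice $\beta := \frac{N-n}{n-1}$ makes $(1+\beta)(n-1) = N-1$ and $\beta(n-1) = N-n$, producing the clean identity
\begin{equation*}
\frac{(Y+Z)^2}{n-1} \ge \frac{Y^2}{N-1} - \frac{Z^2}{N-n}
\end{equation*}
(with the convention that $Z^2/(N-n) = 0$ when $N=n$, where $Z\equiv 0$ anyway). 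Next, expand $Y^2 = |\nb u^\ez|^8 - 2A|\nb u^\ez|^4\bdz_\fz u^\ez + A^2(\bdz_\fz u^\ez)^2$ and bound the cross term by Young's inequality with parameter $\eta'' := \frac{\eta}{1+\eta}$; this specific value is dictated by the requirement $(1+\eta)(1-\eta'') = 1$, which is precisely what cancels the spurious $(1-\eta'')$ prefactor after multiplying the overall inequality by $(1+\eta)$, delivering the clean coefficient $\frac{A^2}{N-1}$ of $(\bdz_\fz u^\ez)^2$ with a residual of order $C(p)\eta^{-1}|\nb u^\ez|^8$.

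Assembling the above steps gives an intermediate inequality of the form
\begin{equation*}
(1+\eta)|\nb u^\ez|^4|\nb^2 u^\ez|^2 \ge 2|\nb u^\ez|^2|\nb^2 u^\ez \nb u^\ez|^2 + \Big[\tfrac{A^2}{N-1} - (1+\eta)\Big](\bdz_\fz u^\ez)^2 - (1+\eta)\tfrac{Z^2}{N-n} - \tfrac{C(n,N,p)}{\eta}|\nb u^\ez|^8,
\end{equation*}
whose right-hand side differs from the target by a spurious $-\eta(\bdz_\fz u^\ez)^2$. This is absorbed into the left-hand side via the elementary bound $(\bdz_\fz u^\ez)^2 \le |\nb^2 u^\ez|^2 |\nb u^\ez|^4$, which upgrades the coefficient $(1+\eta)$ on the LHS to $(1+2\eta)$; a harmless relabelling $\eta \mapsto \eta/2$ then gives \eqref{lbdd-x1}. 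The main technical obstacle is the coordinated tuning of the two Young parameters $\beta = \frac{N-n}{n-1}$ and $\eta'' = \frac{\eta}{1+\eta}$: they must be chosen with algebraic precision so that the final coefficient of $(\bdz_\fz u^\ez)^2$ is \emph{exactly} $\frac{A^2}{N-1}-1$ (no multiplicative loss in $n$ or $N$), the coefficient in front of $Z^2/(N-n)$ is \emph{exactly} $1+\eta$, and all the polynomial errors in $|\nb u^\ez|$ collapse into a single $C(n,N,p)\eta^{-1}|\nb u^\ez|^8$ term.
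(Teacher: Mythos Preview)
Your proof is correct and follows essentially the same strategy as the paper: apply Lemma~\ref{keylem3} to $u^\ez$, substitute $\bdz u^\ez$ using the regularized equation \eqref{plap1}, and then use a precisely tuned Young inequality to pass from the $(n-1)$-quantity to the $(N-1)$-quantity. Your parameter $\beta=\tfrac{N-n}{n-1}$ plays exactly the role of the paper's $\eta_1=\tfrac{N-n}{N-1}$ (they are equivalent reparametrizations of the same splitting).

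The only organizational difference is where the $\eta$-loss lands. The paper expands $[|\nb u^\ez|^2\bdz u^\ez-\bdz_\fz u^\ez]^2$ into four pieces $I_1,\dots,I_4$ and applies Young to the cross term $I_3=-2A|\nb u^\ez|^4\bdz_\fz u^\ez$ in the form $I_3\ge -\eta|\nb^2 u^\ez|^2|\nb u^\ez|^4-C(p)\eta^{-1}|\nb u^\ez|^8$, so the loss goes \emph{directly} into the Hessian term; after adding $\eta|\nb^2 u^\ez|^2|\nb u^\ez|^4$ to both sides one lands on \eqref{lbdd-x1} with no further adjustment. You instead place the $\eta$-loss on $A^2(\bdz_\fz u^\ez)^2$, which forces the extra absorption step $(\bdz_\fz u^\ez)^2\le |\nb^2 u^\ez|^2|\nb u^\ez|^4$ and the relabelling $\eta\mapsto\eta/2$. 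Both routes are valid; the paper's is one step shorter, while your $Y$/$Z$ decomposition and the single inequality $\tfrac{(Y+Z)^2}{n-1}\ge \tfrac{Y^2}{N-1}-\tfrac{Z^2}{N-n}$ make the dimension-raising mechanism more transparent than the paper's separate treatment of $I_2$ and $I_4$.
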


\begin{proof}
Applying \eqref{keyin1} to $u^\ez$ one has
\begin{align}\label{key3-ez}
|\nb^2 u^\ez|^2 |\nb u^\ez|^4  \ge  2 |\nb u^\ez|^2 | {\color{black}\nb^2} u^\ez \nb u^\ez    |^2+
\frac{[|\nb u^\ez|^2 \bdz u^\ez-\bdz_\fz u^\ez]^2}{n-1}-(\bdz_\fz u^\ez)^2
 \end{align}
By \eqref{xxxx} and $\Delta u^\ez=\Delta_h u^\ez+\langle\nb h,\nb u^\ez\rangle$, we have
  $$\bdz  u^\ez= |\nb  u^\ez|^2+\langle \nb {\color{black}u^\ez},\nb h\rangle-(p-2)\frac{ \bdz_\fz u^\ez}{|\nb u^\ez|^2+\ez}.$$
Thus
\begin{align*}
      |\nb u^\ez|^2 \bdz u^\ez-\bdz_\fz u^\ez
&= |\nb u^\ez|^2 \lf ( |\nb  u^\ez|^2+\la \nb u^\ez, \nb h \ra \lr)-\left[ (p-2)\frac{|\nb u^\ez|^2}{|\nb u^\ez|^2+\ez}+1\right]\bdz_\fz u^\ez,
    \end{align*}
and hence,
    \begin{align}\label{ta4}
      [|\nb u^\ez|^2 \bdz u^\ez-\bdz_\fz u^\ez]^2
       & =  \left[ (p-2)\frac{|\nb u^\ez|^2}{|\nb u^\ez|^2+\ez}+1\right]^2(\bdz_\fz u^\ez)^2\nonumber\\
&\quad +|\nb u^\ez|^4\lf ( |\nb  u^\ez|^2+\la \nb u^\ez, \nb h \ra \lr)^2 \notag \\
       & \quad -2\left[ (p-2)\frac{|\nb u^\ez|^2}{|\nb u^\ez|^2+\ez}+1\right]  |\nb  u^\ez|^4
 \bdz_\fz u^\ez \nonumber\\
&\quad  -
  2\left[ (p-2)\frac{|\nb u^\ez|^2}{|\nb u^\ez|^2+\ez}+1\right] |\nb u^\ez|^2\bdz_\fz u^\ez  \la \nb u^\ez, \nb h \ra\nonumber\\
&=:I_1+I_2+I_3+I_4.
    \end{align}

Note that
\begin{equation}\label{xxxx-2}\left|(p-2)\frac{|\nb u^\ez|^2}{|\nb u^\ez|^2+\ez}+1\right|^2\le 4p^2,
\end{equation}
which can be obtained by considering $p>2$ and $1<p<2$ separately.
Using this,  Cauchy-Schwarz inequality, for   $0< \eta<1$, we have
    \begin{align} \label{low3}
   I_3
    &
\ge - \eta |\nb^2 u^\ez  |^2 |\nb u^\ez|^4  -C(p)\frac1{\eta} |\nb u^\ez|^8.
    \end{align}

If $h $ is a constant function and hence $\nb h=0$, $I_2\ge0$ and $I_4=0$,  dividing by $n-1$ in both sides of \eqref{ta4}, by \eqref{low3} one has
    \begin{align*}
     \frac{ [|\nb u^\ez|^2 \bdz u^\ez-\bdz_\fz u^\ez]^2}{n-1}
        \ge -\eta|\nb^2 u^\ez|^2|\nabla u^\ez|^4+ \left[ (p-2)\frac{|\nb u^\ez|^2}{|\nb u^\ez|^2+\ez}+1\right]^2 {\color{black}\frac{(\bdz_\fz u^\ez)^2}{n-1}}  -\frac{C(p)}\eta|\nabla u^\ez|^8.
    \end{align*}
Plugging this in  \eqref{key3-ez}, noting $N=n$,    and  adding $ \eta |\nb^2 u^\ez|^2 |\nb u^\ez|^4 $ in both side, one concludes \eqref{lbdd-x1}.

If $h$ is not a constant function,
set  $\eta_1= \frac{N -n}{N -1}$. Then
    \begin{align} \label{eta1}1- \eta_1  =\frac{n-1}{N -1}>0\quad \mbox{and}\quad  1-\frac{1}{\eta_1}= -\frac{n-1}{N -n}<0. \end{align}
For any $0<\eta<1$ one has
    \begin{align}\label{ta3}
      I_2 &\ge |\nb u^\ez|^4 \la \nb u^\ez, \nb h \ra^2+2|\nb u^\ez|^6 \la \nb u^\ez, \nb h \ra \nonumber\\
&\ge  [1+\eta (1-  \frac1{\eta_1})] |\nb u^\ez|^4
\la \nb u^\ez, \nb h \ra^2  -\frac1{\eta |1-  \frac1{\eta_1}| } |\nb u^\ez|^8.
    \end{align}
Using Cauchy-Schwarz inequality,  we have
    \begin{align}\label{xxxx-1}
   I_4
    &\ge -  \eta_1 \left[ (p-2)\frac{|\nb u^\ez|^2}{|\nb u^\ez|^2+\ez}+1\right]^2(\bdz_\fz  u^\ez)^2    -    \frac{1}{\eta_1}\la \nb u^\ez, \nb h \ra^2 |\nb u^\ez|^4
    \end{align}
Dividing by $n-1$ in both sides of \eqref{ta4}, by \eqref{low3}, \eqref{ta3} and \eqref{xxxx-1}
 one has
 \begin{align*}
     \frac{ [|\nb u^\ez|^2 \bdz u^\ez-\bdz_\fz u^\ez]^2}{n-1}
 & \ge  - \eta |\nb^2 u^\ez  |^2 |\nb u^\ez|^4 +\frac{1-\eta_1}{n-1}\left[ (p-2)\frac{|\nb u^\ez|^2}{|\nb u^\ez|^2+\ez}+1\right]^2(\bdz_\fz u^\ez)^2\nonumber\\
&\quad +(1 +\eta)\frac{1-\frac1{\eta_1}}{n-1}
\la \nb u^\ez, \nb h \ra^2|\nb u^\ez|^4 - C(n,N,p)\frac1{\eta }  |\nb u^\ez|^8
    \end{align*}
By \eqref{eta1},
 \begin{align*}
  \frac{ [|\nb u^\ez|^2 \bdz u^\ez-\bdz_\fz u^\ez]^2}{n-1} &\ge  - \eta |\nb^2 u^\ez  |^2 |\nb u^\ez|^4 +\frac1{N -1} \left[ (p-2)\frac{|\nb u^\ez|^2}{|\nb u^\ez|^2+\ez}+1\right]^2(\bdz_\fz u^\ez)^2\nonumber\\
&\quad -(1 +\eta)\frac1{N -n}
\la \nb u^\ez, \nb h \ra^2|\nb u^\ez|^4 - C(n,N,p)\frac1{\eta }  |\nb u^\ez|^8.
    \end{align*}
 Plugging this in
  \eqref{key3-ez}, and adding $ \eta |\nb^2 u^\ez|^2 |\nb u^\ez|^4 $ in both side,  we conclude \eqref{lbdd-x1} as desired.
\end{proof}

We now prove Lemma \ref{keylem1} by using Lemma \ref{keylem2}.

\begin{proof}[Proof of Lemma \ref{keylem1}] Given any point $x\in U$, if $\nb u^\ez(x)=0$, then \eqref{lbdd2} holds trivially.
Below we assume that $\nb u^\ez(x)\ne 0$. At such point $x$, we already have \eqref{lbdd-x1} in Lemma \ref{keylem2}.
Dividing  by  $ |\nb u^\ez|^4 $ in both sides of \eqref{lbdd-x1},  for $0<\eta<1/2$ we obtain
    \begin{align*}
    (1+\eta) |\nb^2 u^\ez|^2   & \ge   2   \frac{| \nb^2 u^\ez\nb u^\ez  |^2}{  |\nb u^\ez|^2}   +
\left(\frac 1{N-1}\left[ (p-2)\frac{|\nb u^\ez|^2}{|\nb u^\ez|^2+\ez}+1\right]^2-1\right)  \frac{(\bdz_\fz u^\ez)^2
 }{|\nb u^\ez|^4} \nonumber\\
&\quad -(1+\eta)  \frac{\la \nb u^\ez, \nb h\ra^2}{N -n}  -  \frac{C(n,N,p)}\eta |\nb u^\ez|^4  .
    \end{align*}
In both sides, multiplying by $\frac{1-2\eta}{1+\eta} >0 $   and  adding
$$\eta|\nb ^2 u^\ez|^2+ (p-\gz)    \frac{|\nb^2u^\ez \nb u^\ez|^2}{|\nb u^\ez|^2+\ez}
 + (p-2) (2-\gz)    \frac{(\bdz_\fz u^\ez)^2} {[|\nb u^\ez|^2+\ez]^2},$$
we get
\begin{align}\label{lbdd3}
&(1-\eta)|\nb^2u^\ez|^2 +    (p-\gz)    \frac{|\nb^2u^\ez \nb u^\ez|^2}{|\nb u^\ez|^2+\ez}
 + (p-2) (2-\gz)    \frac{(\bdz_\fz u^\ez)^2} {[|\nb u^\ez|^2+\ez]^2}\nonumber\\
&\ge \eta |\nb^2u^\ez|^2+ \left\{\frac{1-2\eta}{1+\eta} 2  +
( p-\gz )   \frac{|\nb u^\ez|^2}{|\nb u^\ez|^2+\ez}  \right\}\frac{| \nb^2 u^\ez\nb u^\ez  |^2}{  |\nb u^\ez|^2}\nonumber\\
&\quad+  \left\{\frac{1-2\eta}{1+\eta} \left(\frac 1{N-1}\left[ (p-2)\frac{|\nb u^\ez|^2}{|\nb u^\ez|^2+\ez}+1\right]^2-1\right)
+ (p-2) (2-\gz)    \frac{|\nb u^\ez|^4} {[|\nb u^\ez|^2+\ez]^2}\right\}  \frac{(\bdz_\fz u^\ez)^2
 }{|\nb u^\ez|^4}\nonumber \\
  & \quad - (1-2\eta)   \frac{\la \nb u^\ez, \nb h\ra^2}{N -n}  - C(n,N,p)\frac1\eta |\nb u^\ez|^4 \nonumber\\
&=:I_1+I_2+I_3+I_4+I_5.
    \end{align}

Recall that if $N=n$ that is, $h$ is a constant function, $I_4=0$ by our convention.
If $N>n$  that is, $h$ is not a constant,  then by  $ 1-2\eta<1 $, we   have
\begin{equation}\label{Nn}I_4\ge -\frac{\la \nb u^\ez, \nb h\ra^2}{N-n}.
\end{equation}

To bound $I_2+I_3$ from below, since $ \gz < 3+\frac{p-1}{N-1}$ and $N\ge 2$ implies
$$p+2-\gz>p+2-  {\color{black}3-\frac{p-1}{N-1}}=(p-1)(1-\frac1{N-1})\ge0,$$
we can find $0<\hat \eta(p,\gz)<1/2$
such that  for $0<\eta<\hat \eta$, one has
$p+2\frac{1-2\eta}{1+\eta}-\gz >0 $. Thus the coefficient of $I_2$  satisfies
\begin{align*}
& (p-\gz )\frac{|\nb u^\ez|^2}{|\nb u^\ez|^2+\ez}+2\frac{1-2\eta}{1+\eta} \ge
(p+2\frac{1-2\eta}{1+\eta}-\gz )\frac{|\nb u^\ez|^2}{|\nb u^\ez|^2+\ez} +\frac{1-2\eta}{1+\eta}\frac{\ez}{|\nb u^\ez|^2+\ez}>0.
\end{align*}
Using this and observing  $$ \frac{|\nb^2 u^\ez \nb u^\ez|^2}{|\nb u^\ez|^2} \ge \frac{|\bdz_\fz u^\ez|^2}{|\nb u^\ez|^4},$$   one has
\begin{align*}
&I_2 +I_3 \\
&\ge
 \Bigg\{
(p -\gz )\frac{|\nb u^\ez|^2}{|\nb u^\ez|^2+\ez} +2\frac{1-2\eta}{1+\eta}  \\
&\quad\quad \left.+\frac{1-2\eta}{1+\eta}\left(\frac 1{N-1}\left[ (p-2)\frac{|\nb u^\ez|^2}{|\nb u^\ez|^2+\ez}+1\right]^2-1\right)+ (p-2) (2-\gz)    \frac{|\nb u^\ez|^4} {[|\nb u^\ez|^2+\ez]^2}\right\}  \frac{(\bdz_\fz u^\ez)^2
 }{|\nb u^\ez|^4}\\
&=:H(\eta) \frac{(\bdz_\fz u^\ez)^2
 }{|\nb u^\ez|^4}
 \end{align*}

We claim that there exists $0<  \bar \eta(n,N,p,\gz)<\hat\eta$
such that   $H(\eta)>0$  for all $0<\eta<\bar \eta$.
Assuming this claim holds for the moment, for any $0<\eta<\bar \eta $, one has
$I_2+I_3> 0$.
From this, \eqref{lbdd3} and \eqref{Nn} we conclude  \eqref{lbdd2} as desired.

Finally we prove {\color{black}the} above claim.   It suffices to show that
\begin{align}\label{h0}H(0)&:=
 (p -\gz )\frac{|\nb u^\ez|^2}{[|\nb u^\ez|^2+\ez]} + 2  + \left(\frac 1{N-1}\left[ (p-2)\frac{|\nb u^\ez|^2}{|\nb u^\ez|^2+\ez}+1\right]^2-1\right)\nonumber\\
&\quad\quad\quad+ (p-2) (2-\gz)    \frac{|\nb u^\ez|^4} {[|\nb u^\ez|^2+\ez]^2}\nonumber\\
& >\dz(N,p,\gz),
 \end{align}
where $\dz(N,p,\gz)>0$ is a constant.
Indeed, by \eqref{xxxx-2},  one has
$$  H(\eta)\ge H(0)- 2[1-  \frac{1-2\eta}{1+\eta}] - [1-\frac{1-2\eta}{1+\eta}] [ \frac {4p^2}{N-1}-1]\ge
 \dz(N,p,\gz)-15p^2 \eta.   $$
If $0<\eta<  \bar \eta=:\min\{\hat\eta,\dz(N,p,\gz)/15p^2\}$,  one has $H(\eta)>0$ and hence the claim holds as desired.

We prove \eqref{h0} as below.
Since
$$
\left[ (p-2)\frac{|\nb u^\ez|^2}{|\nb u^\ez|^2+\ez}+1\right]^2= (p-2)^2\frac{|\nb u^\ez|^4}{[|\nb u^\ez|^2+\ez]^2}+2(p-2)\frac{|\nb u^\ez|^2}{[|\nb u^\ez|^2+\ez]} +1,$$
we rewrite
\begin{align*}
 H(0)
    &  = (p-2)[2-\gz+\frac{p-2}{N-1}] \frac{|\nb u^\ez|^4}{[|\nb u^\ez|^2+\ez]^2} + [p-\gz+\frac{2(p-2)}{N-1}] \frac{ |\nb u^\ez|^2}{[|\nb u^\ez|^2+\ez] } +  \frac{N}{N-1}.
\end{align*}
Observing
 $$\frac{ |\nb u^\ez|^2}{[|\nb u^\ez|^2+\ez] }=   \frac{ |\nb u^\ez|^4}{[|\nb u^\ez|^2+\ez]^2 }+
 \frac{ \ez|\nb u^\ez|^2}{[|\nb u^\ez|^2+\ez]^2 }$$
and $$1= \frac{ |\nb u^\ez|^4}{[|\nb u^\ez|^2+\ez]^2 }+2
 \frac{ \ez|\nb u^\ez|^2}{[|\nb u^\ez|^2+\ez]^2 }+
\frac{ \ez^2}{[|\nb u^\ez|^2+\ez]^2 } ,$$
we further write
\begin{align*}
 H(0)
& = \left\{(p-2)[2-\gz+\frac{p-2}{N-1}]+[p-\gz+\frac{2(p-2)}{N-1}] +\frac{N}{N-1}\right\}  \frac{|\nb u^\ez|^4}{[|\nb u^\ez|^2+\ez]^2}\\
&\quad + \left\{[p-\gz+\frac{2(p-2)}{N-1}]+2\frac{N}{N-1}   \right\} \frac{\ez|\nb u^\ez|^2}{[|\nb u^\ez|^2+\ez]^2}\\
&\quad+  \frac{N}{N-1}
\frac{ \ez^2}{[|\nb u^\ez|^2+\ez]^2 }.
\end{align*}
By
a direct calculation, $\gz<3+\frac{p-1}{N-1}$ implies that
\begin{align*}[p-\gz+\frac{2(p-2)}{N-1}]+2\frac{N}{N-1}&>p+\frac{2(p-2)}{N-1}+2\frac{N}{N-1}-3-\frac{p-1}{N-1}\\
&=p-1 +\frac{2(p-2)+2-(p-1)}{N-1}\\
&=(p-1) \frac{N}{N-1}\\
&>0.\end{align*}
Moreover, $\gz<3+\frac{p-1}{N-1}$ also implies that
\begin{align*}
&(p-2)[2-\gz+\frac{p-2}{N-1}]+[p-\gz+\frac{2(p-2)}{N-1}] +\frac{N}{N-1}\\
&= 3(p-1) +\frac{(p-2)^2 +2(p-2) +1}{N-1} -(p-1)\gz\\
&= 3(p-1) +\frac{(p-1)^2  }{N-1}   -(p-1)\gz\\
&=(p-1)[3+\frac{ p-1   }{N-1} -\gz]\\
&>0.
\end{align*}
Thus
\begin{align*}H(0)&>(p-1)[3+\frac{ p-1   }{N-1} -\gz]\frac{|\nb u^\ez|^4}{[|\nb u^\ez|^2+\ez]^2}+ \frac{N}{N-1}
\frac{ \ez^2}{[|\nb u^\ez|^2+\ez]^2 }\\
&\ge \frac12
\min\left\{(p-1)[3+\frac{ p-1   }{N-1} -\gz], \frac{N}{N-1}\right\}\\
&=:\dz(N,p,\gz)\\
&>0\end{align*}
that is, \eqref{h0} holds.
\end{proof}

Combining \eqref{lbdd2} and \eqref{bypart5} we have the following. Recall that
$$ Ric_h^N (\nb u^\ez, \nb u^\ez)=
 Ric_g(\nb u^\ez, \nb u^\ez)+\la\nb ^2h\nb u^\ez,\nb u^\ez\ra-\frac{\la \nb u^\ez, \nb h \ra^2}{N-n}. $$
\begin{cor} \label{cor1} {\color{black}Let $u^\ez$ be the solution to \eqref{plap1}.}
If $\gz<3+\frac{p-1}{N-1}$ for some $N\ge n$, then for sufficiently small $\eta>0$ one has
\begin{align}\label{e3.x8}
 \eta\int_U &  |\nb^2 u^\ez|^2  [|\nb u^\ez|^2+\ez]^{  \frac{p-\gz}2 }\phi^2 \, e^{-h}d{\rm vol}_g \notag \\
 & \le  - \int_U  Ric_h^N (\nb u^\ez, \nb u^\ez)[|\nb u^\ez|^2+\ez]^{\frac{p-\gz}2 } \phi^2 \, e^{-h}d{\rm vol}_g  \notag  \\
 & \quad + C(n,N,p,\gz,\eta)\int_{U }        \lf ( [|\nb u^\ez|^2+\ez]^{\frac{p-\gz}2 +1} |\nb \phi|^2 + [|\nb u^\ez|^2+\ez]^{\frac{p-\gz}2+2}\phi^2 \lr ) \, e^{-h}d{\rm vol}_g
\end{align}
\end{cor}

Under the Bakry-\'{E}mery  curvature-dimension assumption, we have the following uniform  upper bound.
\begin{lem}  \label{fi} {\color{black}Let $u^\ez$ be the solution to \eqref{plap1}.}
If $\gz<3+\frac{p-1}{N-1}$ and  ${Ric}_h^N\ge -\kz$, then one has
\begin{align}\label{e3.x12}
 & \int_U  |\nb [[|\nb u^\ez|^2+\ez]^{\frac{p-\gz}4} \nb u^\ez]|^2 \phi^2 \, e^{-h}d{\rm vol}_g  \notag \\
 & \le  C(n,N,p,\gz)   \int_U  \kz |\nb u^\ez|^2[|\nb u^\ez|^2+\ez]^{\frac{p-\gz}2 } \phi^2 \, e^{-h}d{\rm vol}_g  \notag  \\
 & \quad +  C(n,N,p,\gz) \int_{U }        \lf ( [|\nb u^\ez|^2+\ez]^{\frac{p-\gz}2+1 } |\nb \phi|^2 +  [|\nb u^\ez|^2+\ez]^{\frac{p-\gz}2+2}\phi^2 \lr ) \, e^{-h}d{\rm vol}_g.
\end{align}

\end{lem}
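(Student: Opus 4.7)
The plan is to combine Corollary 3.5 with a pointwise pointwise estimate relating $\bigl|\nb\bigl[[|\nb u^\ez|^2+\ez]^{(p-\gz)/4}\nb u^\ez\bigr]\bigr|^2$ to $|\nb^2 u^\ez|^2 [|\nb u^\ez|^2+\ez]^{(p-\gz)/2}$, and then absorb the Ricci/Bakry--\'Emery term using the curvature hypothesis ${Ric}_h^N\ge -\kz$.

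First, I would carry out the product-rule expansion
\begin{align*}
\nb_j\bigl([|\nb u^\ez|^2+\ez]^{(p-\gz)/4}\nb_i u^\ez\bigr)
&=\tfrac{p-\gz}{4}[|\nb u^\ez|^2+\ez]^{(p-\gz)/4-1}\,\nb_j|\nb u^\ez|^2\,\nb_i u^\ez\\
&\quad+[|\nb u^\ez|^2+\ez]^{(p-\gz)/4}\,\nb_j\nb_i u^\ez,
\end{align*}
and use $\nb_j|\nb u^\ez|^2=2\la\nb^2 u^\ez\,\nb u^\ez,e_j\ra$ together with $|\nb^2u^\ez\,\nb u^\ez|^2\le|\nb^2 u^\ez|^2|\nb u^\ez|^2$ and $|\nb u^\ez|^2\le |\nb u^\ez|^2+\ez$. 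A short computation then yields the pointwise bound
\begin{equation*}
\bigl|\nb\bigl([|\nb u^\ez|^2+\ez]^{(p-\gz)/4}\nb u^\ez\bigr)\bigr|^2
\le C(p,\gz)\,|\nb^2 u^\ez|^2\,[|\nb u^\ez|^2+\ez]^{(p-\gz)/2}.
\end{equation*}

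Next, I would multiply this pointwise bound by $\phi^2\,e^{-h}$, integrate over $U$, and apply Corollary 3.5 to control the right-hand side. This turns the estimate into
\begin{align*}
\int_U\bigl|\nb\bigl([|\nb u^\ez|^2+\ez]^{(p-\gz)/4}\nb u^\ez\bigr)\bigr|^2\phi^2\,e^{-h}d{\rm vol}_g
&\le -C(n,N,p,\gz,\eta)\int_U Ric_h^N(\nb u^\ez,\nb u^\ez)[|\nb u^\ez|^2+\ez]^{(p-\gz)/2}\phi^2\,e^{-h}d{\rm vol}_g\\
&\quad+C(n,N,p,\gz,\eta)\int_U\bigl([|\nb u^\ez|^2+\ez]^{(p-\gz)/2+1}|\nb\phi|^2+[|\nb u^\ez|^2+\ez]^{(p-\gz)/2+2}\phi^2\bigr)e^{-h}d{\rm vol}_g,
\end{align*}
after choosing $\eta$ as in Lemma 3.3. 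Finally, the assumption $Ric_h^N(\nb u^\ez,\nb u^\ez)\ge -\kz|\nb u^\ez|^2$ immediately gives
\begin{equation*}
-Ric_h^N(\nb u^\ez,\nb u^\ez)[|\nb u^\ez|^2+\ez]^{(p-\gz)/2}\le \kz|\nb u^\ez|^2[|\nb u^\ez|^2+\ez]^{(p-\gz)/2},
\end{equation*}
and a trivial use of $[|\nb u^\ez|^2+\ez]^{(p-\gz)/2}\le[|\nb u^\ez|^2+\ez]^{(p-\gz)/2+1}$ when convenient produces the stated form of \eqref{e3.x12}.

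The genuinely non-routine ingredient here is already embedded in Corollary 3.5 (which rests on Lemmas 3.1 and 3.3), so the proof of Lemma 3.6 itself is largely bookkeeping. The one spot that requires attention is the pointwise comparison in the first step: one has to handle the $[|\nb u^\ez|^2+\ez]^{(p-\gz)/4-1}$ factor, whose exponent can be negative when $p-\gz<4$, and verify that the gradient-of-square term is still controlled by $|\nb^2 u^\ez|^2[|\nb u^\ez|^2+\ez]^{(p-\gz)/2}$ rather than by something that blows up as $|\nb u^\ez|\to 0$; this is precisely what the cancellation $|\nb u^\ez|^2\le |\nb u^\ez|^2+\ez$ delivers, so no additional idea is required.
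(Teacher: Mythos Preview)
Your proposal is correct and follows essentially the same route as the paper: the paper likewise derives the pointwise expansion
\[
|\nb [[|\nb u^\ez|^2+\ez]^{\frac{p-\gz}4} \nb u^\ez]|^2
=[|\nb u^\ez|^2+\ez]^{\frac{p-\gz}2}\Bigl[| \nb^2u^\ez|^2+ (p-\gz)\tfrac{|\nb^2u^\ez \nb u^\ez|^2}{|\nb u^\ez|^2+\ez}
+ \tfrac{(p-\gz)^2}4 \tfrac{|\nb u^\ez|^2 | \nb^2u^\ez \nb u^\ez|^2}{[|\nb u^\ez|^2+\ez]^2}\Bigr]
\le C[|\nb u^\ez|^2+\ez]^{\frac{p-\gz}2} | \nb^2u^\ez|^2,
\]
then feeds this into Corollary~3.5 and uses $Ric_h^N\ge-\kz$, exactly as you describe. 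One small caution: your throwaway remark that $[|\nb u^\ez|^2+\ez]^{(p-\gz)/2}\le[|\nb u^\ez|^2+\ez]^{(p-\gz)/2+1}$ is not valid in general (the base may be $<1$); but this is only needed to match a likely typo in the exponent of the curvature term carried from Corollary~3.5 to the statement, so it does not affect the substance of the argument.
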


\begin{proof}

By $Ric_h^N\ge -\kz$ we know that
$$ -   Ric_h^N(\nb u^\ez, \nb u^\ez) \le \kz|\nb u^\ez|^2$$
Thus the first term in the  right-hand side of \eqref{e3.x8} is bounded from above by
$$\kz \int_U  |\nb u^\ez|^2[|\nb u^\ez|^2+\ez]^{\frac{p-\gz}2 } \phi^2 \, e^{-h}d{\rm vol}_g.$$

 On the other hand, a direct calculation leads to
   \begin{align*}
   &|\nb [[|\nb u^\ez|^2+\ez]^{\frac{p-\gz}4} \nb u^\ez]|^2\\&\quad=
[|\nb u^\ez|^2+\ez]^{\frac{p-\gz}2}\left| \nb^2u^\ez +   \frac{p-\gz}2 \frac{\nb u^\ez \otimes  \nb^2u^\ez \nb u^\ez}{|\nb u^\ez|^2+\ez}\right|^2\\
&\quad=[|\nb u^\ez|^2+\ez]^{\frac{p-\gz}2}[| \nb^2u^\ez|^2+ (p-\gz)\frac{|\nb^2u^\ez \nb u^\ez|^2}{|\nb u^\ez|^2+\ez}
+ \frac{(p-\gz)^2}4 \frac{|\nb u^\ez|^2 | \nb^2u^\ez \nb u^\ez|^2}{[|\nb u^\ez|^2+\ez]^2}]\\
&\quad\le C(n,p,\gz)[|\nb u^\ez|^2+\ez]^{\frac{p-\gz}2} | \nb^2u^\ez|^2.
\end{align*}
Thus, up to a constant multiplier, the {\color{black}left-hand} side of \eqref{e3.x8} is bounded by
$$\int_U|\nb [[|\nb u^\ez|^2+\ez]^{\frac{p-\gz}4} \nb u^\ez]|^2e^{-h}d{\rm vol}_g.$$
We therefore conclude  \eqref{e3.x12} from \eqref{e3.x8}.
\end{proof}

Now we are able to prove Theorem \ref{thm2}.
 \begin{proof}[Proof of Theorem \ref{thm2}.]
 Let ${\color{black}w} \in W_{\loc}^{1,p}(\Omega)$ be any {\color{black}positive weighted} $p$-harmonic function in the domain $\Omega$ and {\color{black}$u=-(p-1)\ln w$}. Given any smooth domain $U \Subset \Omega$, for each $\ez \in (0, 1]$, let ${\color{black}u^\ez} \in  C^\fz(U)$ be the solution to \eqref{plap1}. By Lemma \ref{d1983}, we know that $u^\ez \to u\in C^{1,\az}(U)$,
for some $\az \in(0,1)$ uniformly in $\ez>0$ as $\ez\to0$. Using this and choosing suitable test functions $\phi \in C_c^\fz(U)$ in \eqref{e3.x12}, one concludes $[|\nb u^\ez|^2+\ez]^{\frac{p-\gz}4} \nb u^\ez \in W_{\loc}^{1,2} (U)$ uniformly in $\ez \in (0, 1]$.

Next, we claim that
\begin{equation}\label{cl1}
  |\nb u|^{\frac{p-\gz}2} \nb u \in W_{\loc}^{1,2} (U),
\end{equation}
and
\begin{equation}\label{cl3}
  \mbox{ $\nb([|\nb u^\ez|^2+\ez]^{\frac{p-\gz}4} \nb u^\ez) \to \nb(|\nb u|^{\frac{p-\gz}2} \nb u)$ weakly in $L^2_{\loc} (U, \rr^{n\times n})$ as $\ez \to 0$.}
\end{equation}
To see this, for any subdomain $V \Subset U$, by Lemma \ref{fi}, we already have
$$\sup_{\ez \in (0,1]} \|\nb([|\nb u^\ez|^2+\ez]^{\frac{p-\gz}4} \nb u^\ez) \|_{L^2(V,\rr^{n\times n})} < C(\kz,n,N,p,\gz,{\color{black}V}).$$ For any subsequence $\{\ez_j\}_{j\in \nn}$ which converges to $0$, by the weak
compactness of $W^{2,2}(V )$, up to some subsequence one has $\nb([|\nb u^{\ez_j}|^2+\ez_j]^{\frac{p-\gz}4} \nb u^{\ez_j}) \to z$ weakly in $L^2(V, \rr^{n\times n})$
for some function ${\color{black}z \in L^2(V, \rr^{n\times n})}$.
Let $\{e_1, \cdots, e_n\} \subset T_xU$ be a local orthonormal frame at each $x \in U$. Notice that the $n\times n$ matrix
$$\nb([|\nb u^{\ez_j}|^2+\ez_j]^{\frac{p-\gz}4} \nb u^{\ez_j})= \lf (\nb_{e_l}([|\nb u^{\ez_j}|^2+\ez_j]^{\frac{p-\gz}4} \nb_{e_k} u^{\ez_j}) \lr)_{1 \le k,l \le n}.$$
{\color{black}Recalling from Lemma \ref{d1983} that $\nb u^\ez \to \nb u$ in $C^\az(U)$ and $V \Subset U$, for any $\phi \in C_c^\fz(U)$ with $\phi|_V=1$ and $1 \le k,l \le n$, we have
\begin{align*}
  \lim_{j\to 0} \int_U & \nb_{e_l}([|\nb u^{\ez_j}|^2+\ez_j]^{\frac{p-\gz}4} \nb_{e_k} u^{\ez_j}) \phi \, e^{-h}d{\rm vol}_g \\
  &= -\lim_{j\to 0} \int_U ([|\nb u^{\ez_j}|^2+\ez_j]^{\frac{p-\gz}4} \nb_{e_k} u^{\ez_j})\nb_{e_l} (\phi e^{-h}) \, d{\rm vol}_g  \\
   & = - \int_U (|\nb u|^{\frac{p-\gz}2} \nb_{e_k} u)\nb_{e_l} (\phi e^{-h}) \, d{\rm vol}_g \\
   & =  \int_U \nb_{e_l}(|\nb u|^{\frac{p-\gz}2} \nb_{e_k} u) \phi \, e^{-h}d{\rm vol}_g.
\end{align*}}
This shows that in the distributional sense
$$\nb([|\nb u^{\ez_j}|^2+\ez_j]^{\frac{p-\gz}4} \nb u^{\ez_j}) \to \nb(|\nb u|^{\frac{p-\gz}2} \nb u).$$
Thus $z = \nb(|\nb u|^{\frac{p-\gz}2} \nb u )|_V \in L^2(V, \rr^{n\times n})$ in distributional sense. We therefore have $|\nb u|^{\frac{p-\gz}2} \nb u|_V \in W^{1,2}(V )$, which gives \eqref{cl1}.

Moreover, by the arbitrariness of subsequence $\{ \ez_j\}$, we have $$\nb([|\nb u^\ez|^2+\ez]^{\frac{p-\gz}4} \nb u^\ez) \to \nb(|\nb u|^{\frac{p-\gz}2} \nb u)$$ weakly in $L^2(V, \rr^{n\times n})$ as $\ez \to 0$. Hence by the arbitrariness of $V \Subset U$, \eqref{cl3} holds.


Letting $\ez \to 0 $ in \eqref{e3.x12}  and using the convergence in the above verified claim, we obtain
\begin{align}\label{e3.x11}
&  \int_U  |\nb [|\nb u|^{\frac{p-\gz}2} \nb u]|^2 \phi^2 \, e^{-h}d{\rm vol}_g\nonumber\\
 & \quad\le  C(n,N,p,\gz) \kz   \int_U |\nb u|^{p-\gz+2 } \phi^2 \, e^{-h}d{\rm vol}_g  \notag  \\
 & \quad\quad +  C(n,N,p,\gz) \int_{U }        \lf ( |\nb u|^{p-\gz+2 } |\nb \phi|^2 +  |\nb u|^{p-\gz+4}\phi^2 \lr ) \, e^{-h}d{\rm vol}_g.
\end{align}

  Let  $\phi \in C^{\fz}_c(B_{2r})$, where $B_{4r}\subset U$, such that  $\phi=1$ in $B_r$ and $|\nb \phi| \le \frac{C}{r}$.  Then \eqref{e3.x11} becomes
  \begin{align*}
    \int_{B_r}  |\nb [|\nb u|^{\frac{p-\gz}2} \nb u]|^2 \, e^{-h}d{\rm vol}_g
& \le
 C(n,N,p,\gz) \int_{B_{2r} }       \lf [ (\frac{1}{r^2}+\kz)|\nb u|^{p-\gz+2} +|\nb u|^{p-\gz +4} \lr] \, e^{-h}d{\rm vol}_g.
  \end{align*}
Recalling from \eqref{mest0} the Cheng-Yau type gradient {\color{black}estimate} that
$|\nb u|\le  C(n,N,p) \frac{1+\sqrt \kz r}r$ and noting that $\gz<3+\frac{p-1}{N-1}$ {\color{black}guarantees} $p-\gz+2>0$, we deduce
$$|\nb u|^{p-\gz+2} \le  C(n,N,p,\gz) [\frac{1+\sqrt \kz r}r]^{p-\gz+2}.$$ Together with $\frac{1}{r^2}+\kz \le (\frac{1+\sqrt \kz r}r)^2$,
we conclude
\begin{align*}
    \int_{B_r}  |\nb [|\nb u|^{\frac{p-\gz}2} \nb u]|^2 \, e^{-h}d{\rm vol}_g
& \le
 C(n,N,p,\gz) {\rm vol}_h (B_{2r}) \left[\frac{1+\sqrt \kz r}r\right] ^{p-\gz+4}.
  \end{align*}
Dividing both sides by ${\rm vol}_h (B_{ r})$, noting $
{\rm vol}_h (B_{2r})\le e^{\sqrt\kz r}
{\rm vol}_h (B_{ r})$ from the volume comparison \eqref{vc},
  and recalling  $u= -(p-1)\ln w$,  we conclude  \eqref{mest2}.

Note that \eqref{mest1} is just the special case $\gz=p$ of \eqref{mest2}, where
$p<3+\frac2{N-2}$ {\color{black}guarantees} $p< 3+\frac{p-1}{N-1}$ and hence one  can take $\gz=p$ in \eqref{mest2}.
\end{proof}

Finally, we compare our proof with \cite{wz10,dd16}, in particular, the crucial pointwise lower bound given in Lemma \ref{keylem1} and Lemma \ref{keylem2}.

\begin{rem} \label{finalrmk} \rm
(i) It was {\color{black}well known that a} positive (weighted)  $p$-harmonic function $w$,
and hence   $\ln w$, is always smooth outside of the null set $E_w$ of $\nb \ln w$.
  In $\Omega\setminus E_w$,  the proof of  Lemma \ref{keylem2}  works for $\ln w$
 so to get  \eqref{lbdd-x1} with $u^\ez$ replaced by $\ln w$ and $\ez=0$,
 dividing both sides of which by $|\nb \ln w|^{4}$, for $0<\eta<1/2$ one gets
\begin{align}\label{lbdd-xx1}
     (1+\eta) |\nb^2 \ln w|^2  & \ge
2\frac{  | \nb ^2\ln w \nb \ln w |^2 }{| \nb \ln w |^2}   +
\left(\frac {(p-1)^2}{N-1}  -1\right)  \frac{(\bdz_\fz \ln w)^2}{|\nb \ln w|^4}\nonumber\\
&\quad\quad - (1+\eta) \frac{\la \nb \ln w, \nb h\ra^2}{N-n}
 - C(n,N,p)\frac1\eta |\nb \ln w|^4.
    \end{align}
If $\gz<3+\frac{p-1}{N-1}$, using \eqref{lbdd-xx1} and  noting that {\color{black}the} proof of Lemma \ref{keylem1}  works for $\ln w$,
we get \eqref{lbdd2} with $u^\ez$ replaced by $\ln w$ and $\ez=0$,  that is,  for $\eta>0$ sufficiently small,
\begin{align}\label{lbddx2}
&(1-\eta)|\nb^2\ln w|^2 +   (p-\gz)    \frac{|\nb^2 \ln w \nb \ln w|^2}{|\nb \ln w|^2 }
 + (p-2) (2-\gz)    \frac{(\bdz_\fz \ln w)^2} { |\nb \ln w|^4}\nonumber\\
&\quad\ge  \eta |\nb^2\ln w|^2
  -   \frac{\la \nb \ln w, \nb h\ra^2}{N-n}  - C(n,N,p,\gz)\frac1\eta |\nb \ln w|^4.
    \end{align}
From the proof, we see that both of the coefficient  $2$ of $\frac{  | \nb ^2\ln w \nb \ln w |^2 }{| \nb \ln w |^2}$ and
  the coefficient $ \frac {(p-1)^2}{N-1}  -1 $ of $(\bdz_\fz \ln w)^2 $ in \eqref{lbdd-xx1} are critical to guarantee the existence of sufficiently small $\eta>0$  in \eqref{lbddx2} when $\gz<3+\frac{p-1}{N-1}$.

On the other hand, instead of \eqref{lbdd-xx1},  recall  the following lower bound obtained in
  \cite{dd16} by using Lemma \ref{keylem3} and the equation \eqref{plap3}:
\begin{equation}\label{dd}|\nb^2 \ln w|^2\ge \frac{  | \nb ^2\ln w \nb \ln w |^2 }{| \nb \ln w |^2}
-2\frac{p-1}{n-1}\Delta_\fz \ln w  + \frac1{N-1}|\nb \ln w|^2 -\frac{\langle \nb\ln w,\nb h\rangle^2}{N-n},
\end{equation}
and also, when $N=n$ and $h\equiv 1$, recall the following lower bound  derived  in \cite{wz10}  via Lemma \ref{keylem3} and \eqref{plap3}:
\begin{equation}\label{wz}|\nb^2 \ln w|^2\ge  [1+\min\{\frac{(p-1)^2}{n-1},1\}]\frac{  | \nb ^2\ln w \nb \ln w |^2 }{| \nb \ln w |^2}-2\frac{p-1}{n-1} \bdz_\fz \ln w + \frac1{n-1}|\nb \ln w|^2.\end{equation}
From \eqref{dd} and \eqref{wz}, via a direct check one can {\color{black}conclude $|\nb \ln w|^{\frac{p-\gz+2}2}\in W^{1,2}_\loc$ for  $\gz < 2$}, but  NOT for all   $\gz<3+\frac{p-1}{N-1}$.

(ii) Moreover, unlike \cite{wz10,dd16} where {\color{black}the authors} differentiate the equation \eqref{plap3} for $\ln w$,
we directly derive {\color{black}an} upper bound from Bochner formula for the left-hand side of  \eqref{lbddx2}
with respect to $[|\nb u^\ez|^2+\ez]^{\frac{p-\gz}2}e^{-h}d{\rm vol}_g$.
\end{rem}

\renewcommand{\thesection}{Appendix A}
 \renewcommand{\thesubsection}{ A }
\newtheorem{lemapp}{Lemma \hspace{-0.15cm}}
\newtheorem{thmapp}[lemapp] {Theorem  \hspace{-0.15cm}}
\newtheorem{corapp}[lemapp] {Corollary \hspace{-0.15cm}}
\newtheorem{remapp}[lemapp]  {Remark  \hspace{-0.15cm}}
\newtheorem{defnapp}[lemapp]  {Definition  \hspace{-0.15cm}}
\newtheorem{egapp}[lemapp]  {Example  \hspace{-0.15cm}}
\renewcommand{\theequation}{A.\arabic{equation}}

\renewcommand{\thelemapp}{A.\arabic{lemapp}}

\section{Proof of Lemma \ref{d1983}}

In the appendix, we show Lemma \ref{d1983} by checking equations \eqref{plap3} and \eqref{plap1} are special cases considered in \cite{d83}. To this end, we recall the result in \cite{d83}.

Let $\Omega$ be a domain of $M^n$. Consider the equation
\begin{equation}\label{d1}
   -\ddiv \, \vec{a}(x,\nb u) + b (x,\nb u) =0  \quad \mbox{in $\Omega$}
\end{equation}
where $\vec{a}$ is a map from $\Omega \times \rn$ to $\rn$ and $b$ maps $\Omega \times \rn$ to $\rr$.
Let $\{e_1, \cdots, e_n\} \subset T_x\Omega$ be a local orthonormal frame at each $x \in \Omega$.
By a weak solution of \eqref{d1} we mean a function $u \in W^{1,p}_\loc(\Omega)$ such that
\begin{equation}  \label{defp1}
  \int_\Omega  [\la \vec{a}(x,\nb u), \nb \phi \ra + b (x,\nb u)\phi] \, d{\rm vol}_g=0\quad\forall \phi\in C^\fz_c(\Omega).
  \end{equation}

Assume the following holds for $\vec{a}=(a_1, \cdots, a_n)$ and $b$.
\begin{equation}\label{A1}
  \sum_{i,j=1}^n \frac{\pa a_j}{\pa \eta_i}  (x,\eta) \xi_i\xi_j \ge \gz_0  |\eta|^{p-2}|\xi|^2, \quad  \forall\xi \in \rr^n,p>1,                  \tag{A$_1$}
\end{equation}
\begin{equation}\label{A2}
  \lf|\frac{\pa a_j}{\pa \eta_i} \lr| \le \gz_1  |\eta|^{p-2}, \quad   1 \le i,j \le n,                 \tag{A$_2$}
\end{equation}
\begin{equation}\label{A3}
  |\nb_{e_i} a_j(x,\eta)| \le \gz_1  |\eta|^{p-1}, \quad  1 \le i,j \le n,                 \tag{A$_3$}
\end{equation}
\begin{equation}\label{A4}
 | b(x,\eta)| \le \gz_1  |\eta|^{p},                 \tag{A$_4$}
\end{equation}
and
\begin{equation}\label{AB}
  \ |\nb_{e_i} b(x,\eta)| \le \gz_1  |\eta|^{p}, \  \lf|\frac{\pa b}{\pa \eta_i}(x,\eta) \lr| \le \gz_1  |\eta|^{p-1}, \quad  1 \le i \le n,                 \tag{B}
\end{equation}
for all $\eta \in \rr^n$, where $\gz_i$ are positive constants, $i=0,1$.

For any smooth domain $U \Subset \Omega$ and $\ez \in (0,1]$, consider the regularized equation
\begin{equation}\label{d2}
   -\ddiv \, \vec{a^\ez}(x,\nb u^\ez) + b^\ez (x,\nb u^\ez) =0  \quad \mbox{in $U$ and $u^\ez=u$ on $\pa U$}
\end{equation}
where $\vec{a^\ez}$ is a map from $U \times \rn$ to $\rn$ and $b^\ez$ maps $U \times \rn$ to $\rr$ such that
$$  \lim_{\ez \to 0}\vec{a^\ez}(x,\eta) = \vec{a}(x,\eta) \mbox{ and } \lim_{\ez \to 0}b^\ez(x,\eta) = b(x,\eta) \quad \forall (x,\eta)\in \Omega \times \rn. $$
The weak solution of \eqref{d2} is defined similarly as \eqref{defp1}.
Assume the following holds for $\vec{a^\ez}=(a_1^\ez, \cdots, a_n^\ez)$ and $b^\ez$.
\begin{equation}\label{A1e}
  \sum_{i,j=1}^n \frac{\pa a_j^\ez}{\pa \eta_i}  (x,\eta) \xi_i\xi_j \ge \gz_0  (\ez+|\eta|^2)^{\frac{p-2}2}|\xi|^2, \quad  \xi \in \rr^n,p>1,                  \tag{A$_{1,\ez}$}
\end{equation}
\begin{equation}\label{A2e}
  \lf|\frac{\pa a_j^\ez}{\pa \eta_i} \lr| \le \gz_1  (\ez+|\eta|^2)^{\frac{p-2}2}, \quad   1 \le i,j \le n,                 \tag{A$_{2,\ez}$}
\end{equation}
\begin{equation}\label{A3e}
  |\nb_{e_i} a_j^\ez(x,\eta)| \le \gz_1  (\ez+|\eta|^2)^{\frac{p-1}2}, \quad  1 \le i,j \le n,                 \tag{A$_{3,\ez}$}
\end{equation}
\begin{equation}\label{A4e}
 | b^\ez(x,\eta)| \le \gz_1  (\ez+|\eta|^2)^{\frac{p}2},           \tag{A$_{4,\ez}$}
\end{equation}
for all $\eta \in \rr^n\setminus \{0\}$.

We recall the results in \cite{d83} as follows.
\begin{thmapp}\label{di83}
   Let $\ez \in (0,1]$ and $U\Subset \Omega$. Assume \eqref{A1}-\eqref{A4}, \eqref{AB} and \eqref{A1e}-\eqref{A4e} hold. Then there exists a unique solution  $u^\ez\in C^\fz(U )\cap C^0(\overline U )$ to \eqref{d2}, and moreover, $u^\ez\to u$ in $C^{0}(\overline U)$ and $u^\ez\to u$ in $C^{1,\az}(V)$ uniformly in $\ez>0$
  as $\ez\to 0$ for all $V \Subset U$ where $u$ is the solution to \eqref{d1}.
   As a consequence, $u \in C^{1,\az}(\Omega)$.
\end{thmapp}
Theorem \ref{di83} is a combination of Theorem 1 and Theorem 2 in \cite{d83} and several intermediate results in the proof of these two theorems in \cite{d83}. Indeed, the existence, uniqueness and $C^\fz$-regularity of $u^\ez$ is by elliptic theory in PDE; see for example \cite{gt77}.
Based on these facts, in \cite{d83},
the author first showed that under \eqref{A1}-\eqref{A4}, \eqref{AB} and \eqref{A1e}-\eqref{A4e}, $u^\ez\to u$ in $W^{1,p}(U)$ uniformly in $\ez>0$ in section 2. Moreover, $\|u^\ez\|_{L^\fz(U)} \le \max_{x \in \pa U} \{|u(x)|\}$. Thus recalling that $u^\ez|_{\pa U} = u|_{\pa U}$, we know $u^\ez\to u$ in $C^{0}(\overline U)$. See the discussion around (2.7) in \cite{d83}. Then the author showed that
$\|u^\ez\|_{C^{1,\az}(V)}$ is uniformly bounded independently of $\ez \in (0,1]$ and finally showed that $u^\ez \to u$ in $C^{1,\az}(V)$ and $u \in C^{1,\az}(U)$ for all $V \Subset U$. By the arbitrariness of $U\Subset \Omega$, one has $u \in C^{1,\az}(\Omega)$.

\begin{proof}[Proof of Lemma \ref{d1983}] It suffices to check equations \eqref{plap3} and \eqref{approx} are special ones of \eqref{d1} and \eqref{d2} respectively. To this end,
let $\vec{a}(x,\eta)=e^{-h(x)} |\eta|^{p-2}\eta$, $b(x,\eta)=-e^{-h(x)} |\eta|^{p}$, $\vec{a^\ez}(x,\eta)=e^{-h(x)} (|\eta|^2+\ez)^{\frac{p-2}2}\eta$, and $b^\ez(x,\eta)=-e^{-h(x)} (|\eta|^2+\ez)^{\frac{p-2}2}|\eta|^{2}$ for all $x \in U$ and $\eta \in\rr^n$.
Then in the weak sense, the equations
$$\int_\Omega  [\la \vec{a}(x,\nb u), \nb \phi \ra + b (x,\nb u)\phi] \, d{\rm vol}_g=0, \quad\forall \phi\in C^\fz_c(\Omega) $$
and
$$ \int_\Omega  [\la \vec{a^\ez}(x,\nb u), \nb \phi \ra + b^\ez (x,\nb u)\phi] \, d{\rm vol}_g=0,\quad\forall \phi\in C^\fz_c(\Omega)$$
are exactly \eqref{plap3} and \eqref{approx} respectively.

We show $\vec{a}$ satisfies \eqref{A1}. Noting that $a_j(x,\eta)=e^{-h(x)} |\eta|^{p-2}\eta_j$, we compute
$$\frac{\pa a_j}{\pa \eta_i}(x,\eta)= e^{-h(x)}[(p-2)|\eta|^{p-4}\eta_i\eta_j + \dz_{ij}|\eta|^{p-2}], \quad \forall 1\le i,j \le n$$
where $\dz_{ij}=1$ if $i=j$ and $\dz_{ij}=0$ if $i\ne j$.
Thus
\begin{align*}
  \sum_{i,j=1}^n \frac{\pa a_j}{\pa \eta_i}  (x,\eta) \xi_i\xi_j & = e^{-h(x)}\sum_{i,j=1}^n[((p-2)|\eta|^{p-4}\eta_i\eta_j + \dz_{ij}|\eta|^{p-2})\xi_i\xi_j]\\
   & = e^{-h(x)}|\eta|^{p-4}[(p-2)(\sum_{i=1}^n \eta_i\xi_i)^2 + |\eta|^2|\xi|^2], \quad  \forall\xi \in \rr^n.
\end{align*}
If $1<p<2$, we have
\begin{align*}
  \sum_{i,j=1}^n \frac{\pa a_j}{\pa \eta_i}  (x,\eta) \xi_i\xi_j & \ge
    e^{-h(x)}(p-1)|\eta|^{p-2}|\xi|^2, \quad  \forall \xi \in \rr^n.
\end{align*}
And if $p \ge 2$, we have
\begin{align*}
  \sum_{i,j=1}^n \frac{\pa a_j}{\pa \eta_i}  (x,\eta) \xi_i\xi_j & \ge
    e^{-h(x)}|\eta|^{p-2}|\xi|^2, \quad  \forall \xi \in \rr^n.
\end{align*}
By taking $\gz_0 := \min_{x \in \overline U}\{e^{-h(x)}\}$,
we conclude that $a$ satisfies \eqref{A1}.
By direct computations, one can also check $\vec{a},\vec{a^\ez} \in C^\fz(U \times \rr^n, \rr^n)$, $b,b^\ez\in C^\fz(U \times \rr^n)$ satisfy \eqref{A2}-\eqref{A4}, \eqref{AB} and \eqref{A1e}-\eqref{A4e} respectively. We omit the details.
Thus by Theorem \ref{di83}, we get the desired result.
\end{proof}

\bigskip
\noindent
\textbf{Acknowledgement.} The authors would like to thank the anonymous referees for the careful reading, valuable comments, and many detailed corrections which improve the final presentation of the paper significantly. In particular, we are grateful to the referees for pointing out references \cite{ags15,eks15} that enhance the precision of the literature review in the introduction.

\noindent Jiayin Liu

\noindent Department of Mathematics and Statistics, University of Jyv\"{a}skyl\"{a}, P.O. Box 35
(MaD), FI-40014, Jyv\"{a}skyl\"{a}, Finland

\noindent{\it E-mail }:  \texttt{jiayin.mat.liu@jyu.fi}

\bigskip

\noindent Shijin Zhang

\noindent School of Mathematical Science, Beihang University, Changping District Shahe Higher Education Park
  South Third Street No. 9, Beijing 102206, P. R. China

\noindent{\it E-mail }:  \texttt{shijinzhang@buaa.edu.cn}

\bigskip

\noindent  Yuan Zhou

\noindent
School of Mathematical Science, Beijing Normal University, Haidian District Xinjiekou Waidajie No.19, Beijing 10875, P. R. China

\noindent{\it E-mail }:  \texttt{yuan.zhou@bnu.edu.cn}

\end{document}